\documentclass[a4,11pt]{article}

\oddsidemargin    2.0mm  
\evensidemargin   2.0mm 
\topmargin      -20.0mm    
\textheight     240.0mm     
\textwidth      160.0mm   

\usepackage{amsmath,amsbsy,amsfonts,amssymb}
\usepackage{color} 

\newtheorem{theorem}{Theorem}    
\newtheorem{remark}{Remark}   
\newtheorem{lemma}[theorem]{Lemma}
\newtheorem{corollary}{Corollary}

\newcommand{\pp}{\mathfrak{p}}
\newcommand{\qq}{\mathfrak{q}}
\newcommand{\Va}{\mathfrak{a}}
\newcommand{\ff}{\mathfrak{f}}
\newcommand{\ee}{\mathfrak{e}}
\newcommand{\hh}{\mathfrak{h}}

\renewcommand{\Re}{\mathop{\rm Re}}
\renewcommand{\Im}{\mathop{\rm Im}}

\newcommand{\boxend}{{\hfill $\rule{2.0mm}{2.0mm}$}}

\begin{document}

\title{Extended relativistic Toda lattice, L-orthogonal polynomials and associated Lax pair\thanks{The first and third authors are supported by funds from FAPESP (2014/22571-2, 2016/09906-0, 2017/12324-6) and CNPq (305073/2014-1, 305208/2015-2, 402939/2016-6) of Brazil. The second author was supported by grant from CAPES of Brazil.}}

\author
{
 {Cleonice F. Bracciali$^{1,}$, \ Jairo S. Silva$^2$, \ A. Sri Ranga$^1$\thanks{cleonice@ibilce.unesp.br; jairo.santos@ufma.br; ranga@ibilce.unesp.br}}
  \\[0.5ex]
 {\small $^1$Depto de Matem\'{a}tica Aplicada, IBILCE, } \\
 {\small UNESP - Univ Estadual Paulista,} 
 {\small 15054-000, S\~{a}o Jos\'{e} do Rio Preto, SP, Brazil. } \\[0.5ex]
{\small $^2$Depto de Matem\'{a}tica, Universidade Federal do Maranh\~{a}o, 65080-805, S\~{a}o Lu\'{\i}s, MA, Brazil.}
  }

\maketitle

\begin{abstract}

When a measure $\Psi(x)$ on the real line is  subjected to the modification $d\Psi^{(t)}(x) = e^{-tx} d \Psi(x)$, then  the coefficients of the recurrence relation of the orthogonal polynomials in $x$ with respect to the measure $\Psi^{(t)}(x)$ are known to satisfy the so-called Toda lattice formulas as  functions of $t$.
In this paper we consider a modification of the form $e^{-t(\pp x+\qq/x)}$ of measures or, more generally, of moment functionals, associated with orthogonal L-polynomials and  show that the coefficients of the recurrence relation of these  L-orthogonal polynomials satisfy what we call an extended relativistic Toda lattice. Most importantly, we also establish the so called Lax pair representation associated with this extended relativistic Toda lattice. These results also cover the (ordinary) relativistic Toda lattice formulations considered in the literature by assuming either $\pp =0$ or $\qq=0$. However, as far as Lax pair representation is concern, no complete Lax pair representations were established for the respective relativistic Toda lattice formulations.  
Some explicit examples of extended relativistic Toda lattice and Langmuir lattice are also presented. As further results, the lattice formulas that follow from the three term recurrence relations associated with  kernel polynomials on the unit circle are also established. \\

{\noindent}Keywords:  Relativistic Toda lattice; Lax pairs; L-orthogonal polynomials; Kernel polynomials on the unit circle   

{\noindent}MSC:  34A33, 42C05, 33C47, 47E05  

\end{abstract}

\section{Introduction}  
\label{Sec-Intro}

Let $\Psi$ be a positive measure defined on the real line, and let $\{P_n\}_{n=0}^{\infty}$ be the sequence of monic orthogonal polynomials with respect to $\Psi$, in the sense that
\[ 
\int_{\mathbb{R}} x^sP_n(x)d\Psi(x)=0, \quad 0\leq s\leq n-1, 	\quad n\geq1,
\]
where $P_n$ is a polynomial of exact degree $n$.
It is known (see, for example, \cite{Chihara-book,Ism2005,Szego-book-1939}) that these polynomials satisfy the following three term recurrence relation
\begin{equation}\label{relrec}
P_{n+1}(x)=(x-b_{n+1})P_n(x)-a_{n+1}P_{n-1}(x) ,\quad n\geq1,
\end{equation}
with $P_0(x)=1$ and $P_1(x)=x-b_1$. Moreover, for any $n\geq1$,  the coefficients $b_{n}$ are real and  $a_{n+1}$ are positive ($a_1$ is arbitrary).

Consider the modified measure ${\Psi}^{(t)}$,  in one time variable $t$,  given by $ d {\Psi}^{(t)}(x)$ $ = e ^{-t x}  d \Psi(x)$, and the associated monic orthogonal polynomials $P_n(x; t)$.  As given in \cite{Ism2005} (see also \cite{Peh2001}) the recursion coefficients $a_{n}(t)$ and $b_{n}(t)$ which appear in the three term recurrence relation (\ref{relrec}) for $\{P_n(x; t)\}_{n \geq 0}$  satisfy the semi-infinite Toda lattice equations of motion 
\begin{equation} \label{eqtod}
\begin{array}{l}
 \dot{a}_n(t) =  a_n(t)[b_{n-1}(t)-b_{n}(t)], \\[1.5ex]
 \dot{b}_n(t) =  a_n(t)-a_{n+1}(t),
\end{array}
\quad n \geq 1,
\end{equation}
with the initial conditions $b_{0}(t)=1$, $a_{1}(t)=0$, $a_n(0)=a_n$
and $b_n(0)=b_n$. Here,  we use the usual notation $\dot{f} = \frac{d}{d t} f$. 

Toda lattice is a system of particles on the line with exponential interaction of nearest neighbours \cite{Suris2003}. Toda was the first to consider such a system for infinitely many particles on the line \cite{to1}. The Toda lattice equations (\ref{eqtod}) are obtained from the Newtonian equations of motion (see, for example, \cite{Suris2003})
\begin{equation}\label{todaj}
\ddot{x}_n=e^{x_{n-1} - x_{n}}-e^{x_{n} - x_{n+1}},\quad n\geq1,
\end{equation}
when one takes $b_n=\dot{x}_n$  and $a_n=e^{x_{n-1} - x_{n}}$ for $n\geq1$. \\[-1ex]

The present work  considers a similar study in the case of  the so-called L-orthogonal polynomials. In this respect,  let   $\mathcal{L}$  be a moment functional defined on the linear space $Span\{1, x^{-1}, x, $ $x^{-2}, \ldots \}$ of Laurent polynomials. 

Given $\pp, \qq \in \mathbb{C}$, we also assume that the moment functional $\mathcal{L}$ is  such that $\mathcal{L}[e^{-t\left(\pp x + \qq/x\right)}x^{k}]$ exists for all $k \in \mathbb{Z}$  and for all $t \geq 0$, and that 
\[
     \frac{d}{dt}\mathcal{L}\left[e^{-t\left(\pp x + \frac{\qq}{x}\right)}f(x,t)\right] = \mathcal{L}\left[\frac{\partial}{\partial t} \left( e^{-t\left(\pp x + \frac{\qq}{x} \right)}f(x,t)\right)\right],
\]
for any $f(x,t)$ which is a Laurent polynomial in $x$ and an absolutely continuous function in $t$ for $t \geq 0$. 

Starting with the above $\mathcal{L}$,  we consider  the  parametric family of moment functionals $\mathcal{L}^{(t)}$, $t \geq 0$, such that   
\begin{equation} \label{Eq-MF-TodaModification}
\mathcal{L}^{(t)}[x^{k}] \ = \ \mathcal{L}\left[e^{-t \left(\pp x + \frac{\qq}{x} \right)}x^{k}\right] \ = \ \nu_{k}^{(t)}, \quad k=0, \pm1, \pm2, \ldots, \quad t \geq 0,
\end{equation}
and assume also that 
\begin{equation}  \label{Eq-newHankelCond}
(a) \ \ H_{n}^{(-n)}(t) \neq 0 \quad \mbox{and} \quad  (b)\ \ H_{n+1}^{(-n)}(t) \neq 0,
\end{equation}
for $n \geq 0$ and $t \geq 0$, where the associated Hankel determinants $H_{n}^{(m)}(t)$ are given by $H_{0}^{(m)}(t)= 1$ and
\[
H_{n}^{(m)}(t) = \left|
\begin{array}{cccc}
\nu_{m}^{(t)}     & \nu_{m+1}^{(t)} & \cdots & \nu_{m+n-1}^{(t)}   \\
\nu_{m+1}^{(t)}   & \nu_{m+2}^{(t)} & \cdots & \nu_{m+n}^{(t)}     \\
\vdots            & \vdots          &        & \vdots              \\
\nu_{m+n-1}^{(t)} & \nu_{m+n}^{(t)} & \cdots & \nu_{m+2n-2}^{(t)}  \\
\end{array} \right|, \quad n,m \in \mathbb{Z}, \quad n \geq 1. 
\]

We now define, for any fixed $t \geq 0$, the sequence $\{Q_{n}(x;  t)\}_{n \geq 0}$ of polynomials in $x$   by 
\begin{equation} \label{Eq-Def-Lorthogonality} 
\begin{array}{l}
     Q_{n}(x; t) \mbox{ is a monic polynomial of degree } n \mbox{ in } x  \\[2ex]
     \mathcal{L}^{(t)} \left[ x^{-n+s}Q_{n}(x; t) \right] = 0, \quad s =0, 1, \ldots, n-1,
\end{array}
\end{equation}
for $n \geq 1$. It is known (see, for example,  \cite{JNT83,JTW1980,SRM96})  that the existence of these polynomials is assured by the conditions in (\ref{Eq-newHankelCond}). Also when  the conditions in (\ref{Eq-newHankelCond}) hold these polynomials are known to satisfy the following three term recurrence relation 
\begin{equation} \label{eqrrttj}
Q_{n+1}(x; t) = [x-\beta_{n+1}(t)] Q_{n}(x; t) - \alpha_{n+1}(t)\, x Q_{n-1}(x; t), \quad n \geq 1,
\end{equation}
with $Q_{0}(x; t)=1, \ Q_{1}(x; t)=x - \beta_{1}(t)$. The known expressions for the  coefficients $\beta_n(t)$ and $\alpha_n(t)$ in terms of the moment functional $\mathcal{L}^{(t)}$ are presented in  Section \ref{Sec-Proof-TheoToda} of this manuscript. 

The polynomials  $Q_{n}(x; 0)$ (to be more general, the polynomials $Q_{n}(x; t)$ for any fixed $t$) have been referred  to as L-orthogonal polynomials in some previous papers in the subject  (see \cite{FelixRangaVeron-ANM2011} and references therein). Hence, throughout in this manuscript we will refer to the polynomials $Q_{n}(x; t)$ as L-orthogonal polynomials with respect to the moment functional $\mathcal{L}^{(t)}$ or simply L-orthogonal polynomials.  However, it is important to mention that Zhedanov in his many contributions (see, for example, \cite{Zhed98})  refers to  such polynomials as Laurent biorthogonal polynomials. 

The  polynomials $Q_{n}(x; 0)$, when the determinants in (\ref{Eq-newHankelCond}) (for $t=0$) are positive, have played an important role in the study of the so-called strong Stieltjes moment problem which was introduced in \cite{JTW1980}. 

With respect to the present objective, it is known that the recursion coefficients $\beta_{n}(t)$ and $\alpha_{n}(t)$ (see, for example, \cite{CKVA2002,KhaMirZhed-IJMP1997}) satisfy the equations of motion
\begin{equation}  \label{RTL1}
\begin{array}{l}
\displaystyle \dot{\beta}_{n} = \beta_{n} \left( \frac{\alpha_{n+1}}{\beta_{n+1}\beta_{n}} - \frac{\alpha_{n}}{\beta_{n}\beta_{n-1}} \right),  \quad
\displaystyle  \dot{\alpha}_{n} = \alpha_{n} \left( \frac{1}{\beta_{n-1}} - \frac{1}{\beta_{n}} \right),  
\end{array}  
\end{equation}
for the case where $\pp=0$ and $\qq=1$, and
\begin{equation}  \label{RTL2}
\begin{array}{l}
\displaystyle   \dot{\beta}_{n}  = \beta_{n} \left( \alpha_{n} - \alpha_{n+1} \right),  \quad
\displaystyle  \dot{\alpha}_{n}  =  \alpha_{n} \left( \alpha_{n-1} + \beta_{n-1} - \alpha_{n+1}  - \beta_{n}\right),
\end{array} 
\end{equation}
for the case where $\pp=1$ and $\qq=0$, both with $\beta_n(t) \neq 0$, $\alpha_{n+1}(t) \neq 0$ for $n \geq 1$ and $\alpha_1(t)= 0$  (further, $\alpha_{N+1}(t)=0$ in the finite case, i.e., $n=1, 2, \ldots, N$). The system of equations (\ref{RTL2}) has also been considered in \cite{Com92,ComHaf92}, where  the authors showed its connection to  T--fractions (or the equivalent M--fractions). Observe that the recurrence formula (\ref{eqrrttj}) is the three term recurrence relation satisfied by the denominator polynomials of M--fractions.

The above system of equations known nowadays as the {\em relativistic Toda lattice} was introduced by Ruijsenaars \cite{Rui1990} (and studied, for example, in 
\cite{BR88,BR89,Suris1996,Suris1997,Suris2003}) in the form of the following Newtonian equations of motion
\begin{equation} \label{eq_mov_suris11}
\ddot{x}_n =  (1 + g\dot{x}_{n+1}) (1 + g \dot{x}_{n}) \frac{e^{x_{n+1} - x_{n}}}{1+ g^2 e^{x_{n+1} - x_{n}}}  - (1 + g \dot{x}_{n}) (1 + g \dot{x}_{n-1}) \frac{e^{x_{n} - x_{n-1}}}{1+ g^2 e^{x_{n} - x_{n-1}}},
\end{equation}
where $g$ is a (small) parameter of the model having physical meaning as the inverse speed of light. Notice that the Newtonian equations of motion (\ref{eq_mov_suris11}) for the relativistic Toda lattice are, actually, a one-parameter perturbation of the Newtonian equations of motion (\ref{todaj}) for the usual Toda lattice.

In the present manuscript, different from what is considered in \cite{CKVA2002,KhaMirZhed-IJMP1997},  we consider the Toda lattice equations that follow from the two directional modification  $\pp x + \qq/x$ in (\ref{Eq-MF-TodaModification}).

The main purpose of the present manuscript is to show that the recursion  coefficients $\beta_{n}(t)$ and $\alpha_{n}(t)$  satisfy the lattice equations that we call the {\it extended relativistic Toda lattice} equations. We also establish the Lax pair formulations for this extended relativistic Toda lattice.
The main results of this paper are stated in the following  two theorems.

\begin{theorem}\label{Thm-toda-general} Consider a  moment functional $\mathcal{L}^{(t)}$, defined as in $(\ref{Eq-MF-TodaModification})$, and the  L-orthogonal polynomials $Q_n(x; t)$, $n\geq1$, defined in ($\ref{Eq-Def-Lorthogonality}) $. 
The recursion coefficients $\beta_{n}(t)$ and $\alpha_{n}(t)$ in the recurrence relation $(\ref{eqrrttj})$ satisfy the extended relativistic Toda lattice equations
\begin{equation} \label{Eq-Toda-02}
\dot{\beta}_{n} =  \pp\, \beta_{n} \left( \alpha_{n} - \alpha_{n+1} \right) +
\qq\, \beta_{n} \left(\frac{\alpha_{n+1}}{\beta_{n+1}\beta_{n}} - \frac{\alpha_{n}}{\beta_{n}\beta_{n-1}} \right)
\end{equation}
and
\begin{equation} \label{Eq-Toda-01}
\dot{\alpha}_{n} =  \pp\,  \alpha_{n} \left( \alpha_{n-1} + \beta_{n-1}-\alpha_{n+1} - \beta_{n} \right)
+ \qq\,  \alpha_{n} \left( \frac{1}{\beta_{n-1}} - \frac{1}{\beta_{n}} \right),
\end{equation}
for $n \geq 1$, with the initial conditions $\beta_{0}(t)= 1$, $\alpha_{0}(t)= -1$  and $\alpha_{1}(t)= 0$. Moreover,
\begin{equation} \label{Eq-Toda-03}
\dot{\gamma}_{n} = \pp\,\left(\alpha_{n}\gamma_{n} - \alpha_{n+1} \gamma_{n+1}\right) 
+ \qq\, \left( \frac{\alpha_{n+1}}{\beta_{n}} - \frac{\alpha_{n}}{\beta_{n-1}} \right), \quad n\geq1,
\end{equation}
with $\gamma_{n} = \alpha_{n+1} + \beta_{n}$, \ $n\geq1$. Here,  we have omitted the time variable $t$ for simplification.
\end{theorem}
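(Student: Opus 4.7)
The plan is to derive the pointwise equations $(\ref{Eq-Toda-02})$ and $(\ref{Eq-Toda-01})$ from the explicit moment-functional representations of $\beta_n(t)$ and $\alpha_n(t)$ to be established in Section $\ref{Sec-Proof-TheoToda}$, and then to read off $(\ref{Eq-Toda-03})$ as a purely algebraic consequence. The crucial preliminary observation is that the differentiation hypothesis stated just before $(\ref{Eq-MF-TodaModification})$, applied to $f(x,t)=x^{k}$, gives
\[
\dot{\nu}_{k}^{(t)} \;=\; -\,\pp\,\nu_{k+1}^{(t)}\;-\;\qq\,\nu_{k-1}^{(t)}, \qquad k\in\mathbb{Z}.
\]
Every time-derivative computed below will then decompose into a ``$\pp$-part'' (shift up by one in the moment index) and a ``$\qq$-part'' (shift down by one), which already accounts for the additive structure of the right-hand sides of $(\ref{Eq-Toda-02})$--$(\ref{Eq-Toda-03})$.

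For $(\ref{Eq-Toda-02})$ and $(\ref{Eq-Toda-01})$ I would write $\beta_n$ and $\alpha_{n+1}$ as quotients of the Hankel determinants $H_n^{(-n)}(t)$, $H_{n+1}^{(-n)}(t)$ and their nearest shifts in $m$, apply Jacobi's formula for the derivative of a determinant, and use the shift identity above column by column. Elementary column operations reduce each $\dot H_n^{(m)}(t)$ to a linear combination of neighbouring Hankel determinants, and the Desnanot--Jacobi (Lewis Carroll) identity for adjacent Hankel determinants then collapses the quotient-rule expressions to exactly the right-hand sides of $(\ref{Eq-Toda-02})$ and $(\ref{Eq-Toda-01})$. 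An equivalent but more functional-analytic route is to expand $\dot Q_n(x;t)$, a polynomial of degree at most $n-1$, in a basis adapted to the L-orthogonality and fix the coefficients from the differentiated orthogonality conditions
\[
\mathcal{L}^{(t)}[x^{-n+s}\dot Q_n] \;=\; \pp\,\mathcal{L}^{(t)}[x^{-n+s+1}Q_n] \;+\; \qq\,\mathcal{L}^{(t)}[x^{-n+s-1}Q_n],\qquad s=0,\ldots,n-1,
\]
then comparing the $x^{n-1}$-coefficient (which equals $-(\beta_1+\cdots+\beta_n)$) and the value at $x=0$ (which equals $(-1)^n\beta_1\cdots\beta_n$) on both sides of the differentiated recurrence $(\ref{eqrrttj})$ to isolate $\dot\beta_n$ and $\dot\alpha_n$.

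Equation $(\ref{Eq-Toda-03})$ is then immediate, with no further input from the moment functional. Since $\gamma_n=\alpha_{n+1}+\beta_n$, we have $\dot\gamma_n=\dot\alpha_{n+1}+\dot\beta_n$; substituting $(\ref{Eq-Toda-01})$ at index $n+1$ together with $(\ref{Eq-Toda-02})$ at index $n$ causes the cross-terms $\pp\,\beta_n\alpha_{n+1}$ and $\qq\,\alpha_{n+1}/\beta_{n+1}$ to cancel, leaving $\pp\bigl[\alpha_n(\beta_n+\alpha_{n+1})-\alpha_{n+1}(\beta_{n+1}+\alpha_{n+2})\bigr]+\qq\bigl[\alpha_{n+1}/\beta_n-\alpha_n/\beta_{n-1}\bigr]$, which is $(\ref{Eq-Toda-03})$ after recognising $\gamma_n=\beta_n+\alpha_{n+1}$ and $\gamma_{n+1}=\beta_{n+1}+\alpha_{n+2}$. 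The boundary values $\alpha_1=0$, $\beta_0=1$, $\alpha_0=-1$ are conventional: $\alpha_1=0$ is built into $(\ref{eqrrttj})$, and the choices of $\beta_0$ and $\alpha_0$ render the ``$n=1$'' boundary terms in $(\ref{Eq-Toda-02})$--$(\ref{Eq-Toda-03})$ well-defined (they are all multiplied by $\alpha_1=0$ and therefore drop out harmlessly).

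The main obstacle I anticipate is the determinantal bookkeeping in the second step: tracking the several $H_n^{(m)}$ with shifted parameter $m$ and finding the correct three-term identities among them that Desnanot--Jacobi produces. Once these identities are in hand, the simultaneous appearance of $\alpha_{n+1}/(\beta_{n+1}\beta_n)$ and $\alpha_n/(\beta_n\beta_{n-1})$ in the $\qq$-part of $(\ref{Eq-Toda-02})$ is dictated by Desnanot--Jacobi, and the $\pp$-part is dictated by its $m\to m+1$ shifted analogue, so no further ingenuity should be required beyond careful algebra.
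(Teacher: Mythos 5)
Your reduction of (\ref{Eq-Toda-03}) to (\ref{Eq-Toda-02}) and (\ref{Eq-Toda-01}) is correct and coincides with the paper's final step, and your two starting points --- the moment identity $\dot{\nu}_{k}^{(t)} = -\pp\,\nu_{k+1}^{(t)} - \qq\,\nu_{k-1}^{(t)}$ and the differentiated orthogonality relations $\mathcal{L}^{(t)}[x^{-n+s}\dot{Q}_n] = \pp\,\mathcal{L}^{(t)}[x^{-n+s+1}Q_n] + \qq\,\mathcal{L}^{(t)}[x^{-n+s-1}Q_n]$ --- are also right. The gap is that the two main equations are never actually derived from them. Your first route hands the entire burden to the sentence ``Desnanot--Jacobi then collapses the quotient-rule expressions to exactly the right-hand sides'': that collapse \emph{is} the theorem, you yourself flag the bookkeeping as the anticipated obstacle, and nothing in the proposal identifies the specific determinant identities required (for instance, the four-term combination $\alpha_{n-1}+\beta_{n-1}-\alpha_{n+1}-\beta_{n}$ in the $\pp$-part of (\ref{Eq-Toda-01}) does not drop out of a single Desnanot--Jacobi application). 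Worse, your fallback route rests on a false identity: the coefficient of $x^{n-1}$ in $Q_n$ is \emph{not} $-(\beta_1+\cdots+\beta_n)$. That formula belongs to the classical recurrence (\ref{relrec}); in (\ref{eqrrttj}) the term $-\alpha_{n+1}\,x\,Q_{n-1}$ also contributes at subleading order, and the correct statement --- the paper's (\ref{abzeros}) telescoped with $\alpha_1 = 0$ --- is $a_{n,n-1} = -\sum_{k=1}^{n}(\beta_k+\alpha_k)$. Since you propose to isolate $\dot{\beta}_n$ and $\dot{\alpha}_n$ precisely by comparing this coefficient, the $\pp$-parts of (\ref{Eq-Toda-02}) and (\ref{Eq-Toda-01}) would come out wrong (the error is invisible in the case $\pp = 0$, which is presumably how it slipped in).

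For comparison, the paper's proof needs neither Jacobi's formula nor Desnanot--Jacobi. It differentiates two bilinear relations: from $\mathcal{L}^{(t)}[x^{-n}Q_{n-1}(x;t)Q_n(x;t)]=0$, together with $Q_n(0;t)=(-1)^n\beta_1\cdots\beta_n$ from (\ref{an0}) and the product formulas (\ref{sigxi})--(\ref{sigman}), it obtains the telescoped relation (\ref{soma_dos_betas}), whose difference between consecutive $n$ is (\ref{Eq-Toda-02}); and from $\dot{\sigma}_{n,n}$ with $\sigma_{n,n}=\mathcal{L}^{(t)}[x^{-n}Q_n^2(x;t)]$ it obtains an analogous telescoped relation for $\sum_{k}\dot{\alpha}_k/\alpha_k$, whose difference is (\ref{Eq-Toda-01}). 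The one genuinely nontrivial evaluation there is Lemma \ref{Lemma-01}, $\mathcal{L}^{(t)}[xQ_n(x;t)] = \sigma_{n,n}(t)\sum_{k=1}^{n+1}\gamma_k(t)$, which, combined with the corrected subleading-coefficient identity, is what makes $\mathcal{L}^{(t)}[x^{-n+1}Q_n^2(x;t)]$ computable and produces the $\pp$-part of $\dot{\alpha}_n$; your proposal has no counterpart to it. To repair your plan with least effort: keep your differentiated orthogonality relations, but instead of determining $\dot{Q}_n$ coefficientwise, pair them with $Q_{n-1}$ and $Q_n$ as the paper does, correct the subleading-coefficient identity to $a_{n,n-1}=-\sum_{k=1}^{n}(\beta_k+\alpha_k)$, and supply the analogue of Lemma \ref{Lemma-01}.
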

The proof of this theorem is given in Section \ref{Sec-Proof-TheoToda} of the manuscript. 

The following interesting result holds with respect to the extended relativistic Toda lattice equations given by  Theorem \ref{Thm-toda-general}.

\begin{theorem} \label{Thm-Lpair-ETLE}
Let the infinite Hessenberg matrix $\mathcal{H}(t) = \mathcal{H}$ and the infinite tridiagonal matrix $\mathcal{F}(t) = \mathcal{F}$ be given by
\begin{equation} \label{Eq-LaxPairHF}
{\mathcal H}= \left(
\begin{array}{cccccc}
\gamma_1 & \gamma_2 & \gamma_3 & \cdots  & \gamma_{n} & \cdots \\[1ex]
\alpha_2   & \gamma_2 & \gamma_3 & \cdots  & \gamma_{n} & \cdots \\[1ex]
0        & \alpha_3   & \gamma_3 & \cdots  & \gamma_{n} & \cdots \\
\vdots   & \ddots   & \ddots   & \ddots  & \vdots     &        \\
0        & \cdots   & 0        & \alpha_{n}& \gamma_{n} & \cdots \\
\vdots   &          & \vdots   & \ddots  & \ddots     & \ddots
\end{array}\right),  \quad
{\mathcal{F}} = \left(
\begin{array}{cccccc}
\ff_1  & \hh_1  & 0      & \cdots & 0        & \cdots \\
\ee_2  & \ff_2  & \hh_2  & \ddots & \vdots   &        \\
 0     & \ee_3  & \ff_3  & \ddots & 0        & \cdots \\
\vdots & \ddots & \ddots & \ddots & \hh_{n-1}& \ddots \\
 0     & \cdots & 0      & \ee_n  & \ff_n    & \ddots \\
\vdots &        &\vdots  & \ddots & \ddots   & \ddots 
\end{array}\right), \qquad
\end{equation} 
where $\gamma_{k} = \alpha_{k+1} + \beta_{k}$,  
\[
  \ee_{k} = -\pp \alpha_{k}, \quad \ff_{k} = \pp \alpha_{k} + \frac{\qq}{\gamma_{k} - \alpha_{k+1}}, \quad \hh_{k} = -\frac{\qq}{\gamma_{k} - \alpha_{k+1}}, 
\]
for $k \geq 1$.  Then the pair $\{{\mathcal H}, {\mathcal F}\}$ is a Lax pair for the extended relativistic Toda lattice equations $(\ref{Eq-Toda-01})$ and $(\ref{Eq-Toda-03})$ given by  Theorem \ref{Thm-toda-general}.  Precisely,  
$ \dot{{\mathcal H}}=[{\mathcal H}, {\mathcal F}],$
where $[{\mathcal H}, {\mathcal F}]={\mathcal H}{\mathcal F}-{\mathcal F}{\mathcal H}.$
\end{theorem}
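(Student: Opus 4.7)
The plan is to verify the Lax equation $\dot{\mathcal{H}} = \mathcal{H}\mathcal{F} - \mathcal{F}\mathcal{H}$ entry by entry, exploiting the very rigid sparsity of both factors. The matrix $\mathcal{H}$ is upper Hessenberg with subdiagonal entries $\alpha_{i+1}$ and a ``banner'' on and above the diagonal whose column $j$ is constant and equal to $\gamma_j$; the tridiagonal matrix $\mathcal{F}$ has entries satisfying the telescoping identity $\ee_k + \ff_k + \hh_k = 0$. Since $\dot{\mathcal{H}}_{i,j}$ equals $\dot{\gamma}_j$ on and above the diagonal, $\dot{\alpha}_i$ on the subdiagonal, and $0$ elsewhere, the verification splits naturally into five regions: $j \geq i+1$, $j = i$, $j = i-1$, $j = i-2$, and $j < i-2$.

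In the banner region $j \geq i+1$, the three $\mathcal{H}$-entries coupling to $\mathcal{F}$ from the left are $\gamma_{j-1},\gamma_j,\gamma_{j+1}$, while the three coupling from the right are all equal to $\gamma_j$. Hence the identity $\ee_i + \ff_i + \hh_i = 0$ annihilates $(\mathcal{F}\mathcal{H})_{i,j}$, leaving
\[
  [\mathcal{H},\mathcal{F}]_{i,j} \ = \ \gamma_{j-1}\hh_{j-1} + \gamma_{j}\ff_{j} + \gamma_{j+1}\ee_{j+1}.
\]
A brief expansion, combined with the single algebraic identity $\gamma_k/\beta_k = \alpha_{k+1}/\beta_k + 1$, which converts between the two equivalent forms of the $\qq$-term, reduces this to $\dot{\gamma}_j$ as given by $(\ref{Eq-Toda-03})$. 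The diagonal case $j = i$ is similar: a single additional constant term of value $\qq$ appears on the $(\mathcal{F}\mathcal{H})$ side and is absorbed by the same identity.

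The subdiagonal case $j = i-1$ is more delicate but still short: only the products $\alpha_i\ff_{i-1} + \gamma_i\ee_i$ and $\ee_i\gamma_{i-1} + \ff_i\alpha_i$ survive, and after substituting $\gamma_{i-1} = \alpha_i + \beta_{i-1}$ and $\gamma_i = \alpha_{i+1} + \beta_i$ the $\pp$-bracket collapses to $\alpha_{i-1} + \beta_{i-1} - \alpha_{i+1} - \beta_i$, matching $\dot{\alpha}_i$ of $(\ref{Eq-Toda-01})$. In the case $j = i-2$ the two surviving products both equal $-\pp\alpha_i\alpha_{i-1}$ and cancel exactly, and for $j < i-2$ all relevant entries of $\mathcal{H}$ vanish.

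The main obstacle is not a deep one but rather the careful bookkeeping at the top-left corner, where either a row or a column of $\mathcal{F}$ or $\mathcal{H}$ is absent and some subscripts are out of range. The initial values $\alpha_1(t) = 0$, $\beta_0(t) = 1$, and $\alpha_0(t) = -1$ (so that $\gamma_0 = 1$) prescribed by Theorem \ref{Thm-toda-general} are exactly what is needed to keep the truncated identities at $i = 1$, and at the entry $(2,1)$, consistent with the general formulas above. With these boundary values in place, all five cases close simultaneously and the Lax equation holds entrywise.
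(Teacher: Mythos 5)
Your proposal is correct, and it verifies exactly what the theorem asserts: the entries of $[\mathcal{H},\mathcal{F}]$ on and above the diagonal reproduce $\dot{\gamma}_j$ from $(\ref{Eq-Toda-03})$, the subdiagonal entries reproduce $\dot{\alpha}_i$ from $(\ref{Eq-Toda-01})$, and everything else vanishes; I checked your five regions, including the $j=i-2$ cancellation $\alpha_i\ee_{i-1}=\ee_i\alpha_{i-1}=-\pp\alpha_i\alpha_{i-1}$ and the corner cases at $i=1$ and $(2,1)$, and they all close. The route is, however, not quite the paper's. The paper first proves a finite-order Lax representation (its Theorem~\ref{Thm-Finite-Lpair-ETLE}) for the truncated lattice with the extra boundary condition $\alpha_{N+1}(t)=0$, writing $\mathcal{F}_N=\pp\,\mathcal{X}_N+\qq\,\mathcal{Y}_N$ as a sum of a lower-bidiagonal matrix (entries $\alpha_k$, $-\alpha_k$) and an upper-bidiagonal matrix (entries $\pm 1/(\gamma_k-\alpha_{k+1})=\pm1/\beta_k$), asserts the matrix multiplications with a ``one easily finds,'' and then obtains Theorem~\ref{Thm-Lpair-ETLE} by letting $N\to\infty$. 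You instead work directly on the infinite matrices, which is legitimate since the tridiagonality of $\mathcal{F}$ makes every entry of $\mathcal{H}\mathcal{F}$ and $\mathcal{F}\mathcal{H}$ a sum of at most three terms (you rely on this implicitly; it would be worth one explicit sentence). What your version buys: it supplies the computations the paper suppresses --- the zero-row-sum identity $\ee_i+\ff_i+\hh_i=0$ annihilating $(\mathcal{F}\mathcal{H})_{i,j}$ in the banner region, the conversion $\gamma_k-\alpha_{k+1}=\beta_k$, and the observation that $\alpha_1=0$ forces $\ee_1=0$ and rescues the truncated row sum at $i=1$ --- and it avoids both the artificial boundary condition $\alpha_{N+1}=0$ and the (harmless but unstated) entrywise stabilization argument needed when passing from finite $N$ to the infinite case. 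What the paper's route buys in exchange is the finite-order Lax pair as a standalone result, with the splitting $\pp\,\mathcal{X}_N+\qq\,\mathcal{Y}_N$ exhibiting the $\pp$-flow and $\qq$-flow contributions separately, which also makes transparent how the extended lattice degenerates to the two classical relativistic Toda cases $\pp=0$ or $\qq=0$.
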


Further information about {\em Lax pairs} and the proof of Theorem \ref{Thm-Lpair-ETLE} are given  in Section \ref{Sec-Lax-Lorth} of the manuscript.
 
In Section \ref{Sec-LOrthogonal-RL} we consider the case in which $\mathcal{L}[f] = \int_{a}^{b} f(x) d \psi(x)$, where $0 \leq  a < b \leq \infty$ and $\psi$ is a strong positive measure defined on $[a, b]$.  The word strong, adopted from the work of Jones, Thron and Waadeland \cite{JTW1980} on the strong moment problem, is to indicate that the measure has moments of non-negative and negative order. Two examples with $\pp \qq \neq 0$ are given, where the solutions have very nice representations.   

The two parameter extension   $\pp x + \qq/x$ also permits us to extend the results to the unit circle. If $\mu$ is a positive measure on the unit circle  $\mathbb{T}=\{x=e^{i\theta} : 0 \leq \theta \leq 2\pi \}$, then with $\pp = \overline{\qq}$, the measures $\mu^{(t)}$ given by 
\[
    d \mu^{(t)}(x) = e^{-t(\pp x + \qq/x)} d \mu(x),
\]
is also positive in $\mathbb{T}$ for any $t$ real. Hence, respectively in Sections \ref{Sec-CDKernels-UC} and \ref{Sec-OPUC}, we have been able to proceed with the study of looking at the recurrence coefficients $\beta_{n}(t)$  and $\alpha_{n+1}(t)$ that follow from the moment functionals
\begin{equation} \label{Eq-Toda-Mod-KOPUC}
   \mathcal{L}^{(t)}[f] = \int_{\mathbb{T}} f(x)\, e^{-t(\overline{\qq} x + \qq/x)} (x-w)  d \mu(x),
\end{equation}
where $|w|=1$ fixed and from the moment functionals 
\begin{equation} \label{Eq-Toda-Mod-OPUC}
   \mathcal{L}^{(t)}[f] = \int_{\mathbb{T}} f(x)\,  e^{-t(\overline{\qq} x + \qq/x)}  x d \mu(x).
\end{equation}
In the case of (\ref{Eq-Toda-Mod-OPUC}) the polynomials $Q_{n}(x;t)$   are the monic orthogonal polynomials on the unit circle with respect to $\mu^{(t)}$ (also known as Szeg\H{o} polynomials), and in the case of (\ref{Eq-Toda-Mod-KOPUC})  the polynomials $Q_{n}(x;t)$ are the monic kernel polynomials, say $const\, K_{n}^{(t)}(x,w)$, on the unit circle with respect to $\mu^{(t)}$.  For a comprehensive collection of information concerning orthogonal polynomials on the unit circle and associated kernel polynomials we refer to \cite{Simon-book-p1}.


\setcounter{equation}{0}
\section{The proof of Theorem \ref{Thm-toda-general} }
\label{Sec-Proof-TheoToda}

Let $Q_{n}(x; t) = \sum_{j=0}^{n} a_{n,j}(t) x^j$, where $a_{n,n}(t) = 1$. From the  linear system (\ref{Eq-Def-Lorthogonality}) in the coefficients $a_{n,j}(t)$ of $Q_{n}(x; t)$, one easily finds 
\[
  Q_{n}(x; t) = \frac{1}{H_{n}^{(-n)}(t)}  \left|
\begin{array}{cccc}
\nu_{-n}^{(t)} & \nu_{-n+1}^{(t)} & \cdots & \nu_{0}^{(t)}   \\
\vdots         & \vdots           &        & \vdots          \\
\nu_{-1}^{(t)} & \nu_{0}^{(t)}    & \cdots & \nu_{n-1}^{(t)} \\
1              & x                & \cdots & x^n             \\
\end{array} \right|,  \quad n \geq 1,
\]
and if $\sigma_{n,-1}(t) = \mathcal{L}^{(t)}[x^{-n-1} Q_{n}(x; t)]$ and $\sigma_{n,n}(t) = \mathcal{L}^{(t)}[Q_{n}(x; t)]$ then 
\[
   \sigma_{n,-1}(t) = (-1)^{n}\frac{H_{n+1}^{(-n-1)}(t)}{H_{n}^{(-n)}(t)}, \quad \sigma_{n,n}(t) = \frac{H_{n+1}^{(-n)}(t)}{H_{n}^{(-n)}(t)}, \quad n \geq 1.
\]
Also observe that  $\sigma_{0,0}(t) = \nu_{0}^{(t)}$ and $\sigma_{0,-1}(t) = \nu_{-1}^{(t)}$.

Clearly, the sequence of polynomials $\{Q_{n}(x; t)\}_{n \geq 0}$ exists for all $t \geq 0$ because $H_{n}^{(-n)}(t) \neq 0$ for all $t \geq 0$, which follows from condition $(a)$ of (\ref{Eq-newHankelCond}). Observe that the condition $(b)$ in  (\ref{Eq-newHankelCond}) also means that $Q_{n}(0; t) \neq 0$ for all $n \geq 1$ and  for all $t \geq 0$. 

With both conditions $(a)$ and $(b)$ in  (\ref{Eq-newHankelCond}) the recurrence relation (\ref{eqrrttj}) holds for any $t \geq 0$, with  
$\beta_{1}(t) = \sigma_{0,0}(t)/\sigma_{0,-1}(t)$,
\begin{equation} \label{Eq-coefs-TTRR}
\beta_{n+1}(t) = - \alpha_{n+1}(t) \frac{\sigma_{n-1,-1}(t)}{\sigma_{n,-1}(t)} \ \ \mbox{and} \ \ \alpha_{n+1}(t) = \frac{\sigma_{n,n}(t)}{\sigma_{n-1,n-1}(t)}, \ \ n \geq 1.
\end{equation}
From the recurrence relation (\ref{eqrrttj}), we directly obtain
\begin{equation}\label{an0}
 a_{n,0}(t) = Q_{n}(0; t) = (-1)^n  \beta_{n} (t)\beta_{n-1} (t)\cdots \beta_{2}(t) \beta_{1}(t), \quad n \geq 1 
\end{equation}
and
\begin{equation} \label{abzeros}
\alpha_{n+1}(t) + \beta_{n+1}(t) = a_{n,n-1}(t) - a_{n+1,n}(t), \quad n\geq1.
\end{equation}

Notice that using (\ref{Eq-coefs-TTRR}), we also obtain for $n\geq0$,
\begin{equation} \label{sigxi}
\sigma_{n,n}(t) \  = \ \alpha_{n+1}(t)  \alpha_{n}(t) \cdots  \alpha_{2}(t) \sigma_{0,0}(t)
\end{equation}
and
\begin{equation} \label{sigman}
\sigma_{n,-1}(t) = (-1)^n \frac{\alpha_{n+1}(t) \alpha_{n} (t)\cdots \alpha_{2}(t) \sigma_{0,0}(t)}{\beta_{n+1}(t)\beta_{n}(t) \cdots  \beta_{2}(t)\beta_{1}(t)}
= \frac{\sigma_{n,n}(t)}{\beta_{n+1}(t)a_{n,0}(t)}.
\end{equation}

\begin{lemma}\label{Lemma-01} Let us denote $\tau_n(t) = \mathcal{L}^{(t)} \left[x Q_{n}(x; t) \right]$, $n\geq0$. Then,
\[
\tau_n(t) = \sigma_{n,n}(t) \sum_{k=1}^{n+1} [\alpha_{k+1}(t) + \beta_{k}(t)] =   \sigma_{n,n}(t) \sum_{k=1}^{n+1} \gamma_{k}(t),
\]
for $n\geq 0$, where $\gamma_{k}(t) = \alpha_{k+1}(t) + \beta_{k}(t)$, $k = 1, 2, \ldots, n+1.$
\end{lemma}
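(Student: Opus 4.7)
The plan is to derive a first order recursion for $\tau_n(t)$ in $n$ from the three term recurrence relation $(\ref{eqrrttj})$, and then iterate it (or proceed by induction) using the telescoping property $\sigma_{n,n}(t)=\alpha_{n+1}(t)\,\sigma_{n-1,n-1}(t)$ coming from $(\ref{sigxi})$.

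First I would rewrite $(\ref{eqrrttj})$ as $x Q_{n}(x;t) = Q_{n+1}(x;t) + \beta_{n+1}(t)\,Q_{n}(x;t) + \alpha_{n+1}(t)\,x Q_{n-1}(x;t)$ and apply the functional $\mathcal{L}^{(t)}$ to both sides. Since $\mathcal{L}^{(t)}[Q_{n+1}] = \sigma_{n+1,n+1}$ and $\mathcal{L}^{(t)}[Q_{n}] = \sigma_{n,n}$ by definition, one obtains
\[
\tau_n(t) \;=\; \sigma_{n+1,n+1}(t) + \beta_{n+1}(t)\,\sigma_{n,n}(t) + \alpha_{n+1}(t)\,\tau_{n-1}(t).
\]
Invoking $(\ref{sigxi})$ in the form $\sigma_{n+1,n+1}(t) = \alpha_{n+2}(t)\,\sigma_{n,n}(t)$ and collecting the two terms with $\sigma_{n,n}(t)$ using $\gamma_{n+1}(t) = \alpha_{n+2}(t) + \beta_{n+1}(t)$, this simplifies to
\[
\tau_n(t) \;=\; \gamma_{n+1}(t)\,\sigma_{n,n}(t) + \alpha_{n+1}(t)\,\tau_{n-1}(t), \qquad n\geq 1.
\]

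Next I would establish the base case $n=0$. Since $Q_0\equiv 1$, we have $\tau_0(t)=\nu_1^{(t)}$; the identity $\sigma_{1,1}(t)=\mathcal{L}^{(t)}[Q_1]=\nu_1^{(t)}-\beta_1(t)\nu_0^{(t)}$ combined with $\sigma_{1,1}(t)=\alpha_2(t)\,\sigma_{0,0}(t)=\alpha_2(t)\,\nu_0^{(t)}$ gives $\nu_1^{(t)}=(\alpha_2(t)+\beta_1(t))\,\sigma_{0,0}(t)=\gamma_1(t)\,\sigma_{0,0}(t)$, which is exactly the claimed formula for $n=0$.

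For the inductive step, assuming $\tau_{n-1}(t)=\sigma_{n-1,n-1}(t)\sum_{k=1}^{n}\gamma_k(t)$, the recursion above yields
\[
\tau_n(t) \;=\; \gamma_{n+1}(t)\,\sigma_{n,n}(t) + \alpha_{n+1}(t)\,\sigma_{n-1,n-1}(t)\sum_{k=1}^{n}\gamma_k(t),
\]
and one final application of $\sigma_{n,n}(t)=\alpha_{n+1}(t)\,\sigma_{n-1,n-1}(t)$ from $(\ref{sigxi})$ collapses the right hand side to $\sigma_{n,n}(t)\sum_{k=1}^{n+1}\gamma_k(t)$, completing the induction. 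I do not foresee any real obstacle: the only subtle point is recognizing that one must use $(\ref{sigxi})$ twice, once to replace $\sigma_{n+1,n+1}$ by $\alpha_{n+2}\sigma_{n,n}$ (to produce the term $\gamma_{n+1}\sigma_{n,n}$) and once to absorb the factor $\alpha_{n+1}\sigma_{n-1,n-1}$ into $\sigma_{n,n}$ at the end.
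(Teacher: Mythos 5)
Your proof is correct and essentially identical to the paper's: both apply $\mathcal{L}^{(t)}$ to the recurrence (\ref{eqrrttj}) to get $\tau_n = \sigma_{n+1,n+1} + \beta_{n+1}\sigma_{n,n} + \alpha_{n+1}\tau_{n-1}$, and then telescope using $\sigma_{n,n} = \alpha_{n+1}\sigma_{n-1,n-1}$ from (\ref{sigxi}) --- the paper's division by $\sigma_{n,n}$ to produce a telescoping sum is just your induction in different clothing. Your base-case computation is precisely the observation $\alpha_2(t)+\beta_1(t) = \mathcal{L}^{(t)}[x]/\mathcal{L}^{(t)}[1]$ that the paper invokes without detail.
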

\begin{proof} 
The validity of the Lemma for $n=0$ is easily verified. 

From the recurrence relation (\ref{eqrrttj}),  we have
\[
xQ_{n}(x; t) =  Q_{n+1}(x; t) + \beta_{n+1}(t)Q_{n}(x; t)  + \alpha_{n+1}(t)xQ_{n-1}(x; t), \quad \ n \geq 1.
\]
Applying the moment function $\mathcal{L}^{(t)}$ in the above equality gives
\[ 
\tau_{n}(t) = \sigma_{n+1,n+1}(t) + \beta_{n+1} (t)\sigma_{n,n} (t) + \alpha_{n+1}(t) \tau_{n-1}(t), \quad n \geq 1.
\]
Consequently, with the observation that $\alpha_{2}(t) + \beta_1(t) = \mathcal{L}^{(t)}[x]/\mathcal{L}^{(t)}[1]$, we find 
\[
\frac{\tau_{n}(t)}{\sigma_{n,n}(t)}  = \alpha_{n+2}(t) + \beta_{n+1}(t)  + \frac{\tau_{n-1}(t)}{\sigma_{n-1,n-1}(t)} 
= \sum_{k=1}^{n+1} [\alpha_{k+1}(t)  + \beta_{k}(t)]  =  \sum_{k=1}^{n+1} \gamma_{k}(t),
\]
for $n \geq 1$, this concludes the proof of the Lemma.  \boxend 
\end{proof}

Since $Q_{n}(x; t)$ is a polynomial of degree $n$ in the variable $x$, from (\ref{Eq-Def-Lorthogonality}) we have
$
\mathcal{L}^{(t)} [x^{-n} $ $ Q_{n-1}(x; t) Q_{n}(x; t) ] = 0, $ $ n\geq1.
$
Differentiating  with respect to the variable $t$ and denoting $\frac{\partial}{\partial t} Q_{n}(x; t)=\dot{Q}_{n}(x; t)$, from (\ref{Eq-MF-TodaModification}), we obtain
\begin{equation} \label{auxiliar03}
\begin{array}{l}
 \displaystyle \mathcal{L}^{(t)} \left[ x^{-n} \dot{Q}_{n-1}(x; t)Q_{n}(x; t) \right] + \mathcal{L}^{(t)} \left[ x^{-n} Q_{n-1}(x; t)\dot{Q}_{n}(x; t) \right] \\[1.5ex]
 \displaystyle - \, \mathcal{L}^{(t)} \left[ x^{-n}\left(\pp\,x +\frac{\qq\,}{x}\right)Q_{n-1}(x; t) Q_{n}(x; t) \right] =  0, \quad n \geq 1.
\end{array}
\end{equation}
However, since $\dot{Q}_{n}(x; t)= \sum_{j=0}^{n-1} \dot{a}_{n,j}(t) x^j$ is a polynomial of degree at most $n-1$ in the variable $x$,
again from  (\ref{Eq-Def-Lorthogonality}), we can see that 
$
   \mathcal{L}^{(t)}[ x^{-n} $ $ \dot{Q}_{n-1}(x; t)Q_{n}(x; t)] = 0  
$
and
$
\mathcal{L}^{(t)}[ x^{-n} $ $ Q_{n-1}(x; t)\dot{Q}_{n}(x; t)] = \dot{a}_{n,0}(t) \sigma_{n-1,-1}(t),
$
for $n \geq 1$.   Furthermore,
\[
\mathcal{L}^{(t)} \left[ x^{-n+1}Q_{n-1}(x; t) Q_{n}(x; t) \right] =
a_{n-1,n-1}(t) \sigma_{n,n}(t) =  \sigma_{n,n}(t)
\]
and
$
\mathcal{L}^{(t)} \left[ x^{-n-1}Q_{n-1}(x; t) Q_{n}(x; t) \right] = a_{n-1,0}(t) \sigma_{n,-1}(t), 
$
for $n \geq 1$. By substituting these in (\ref{auxiliar03}), we then conclude that
\[
\dot{a}_{n,0}(t) \sigma_{n-1,-1}(t) =  \pp\,  \sigma_{n,n}(t)  + \qq\, a_{n-1,0}(t) \sigma_{n,-1}(t), \quad n \geq 1.
\]
Using (\ref{Eq-coefs-TTRR}),  (\ref{an0}) and  (\ref{sigman}), we get
\begin{equation} \label{soma_dos_betas}
\sum_{k=1}^{n} \frac{\dot{\beta}_k(t)}{\beta_k(t)}    =
-\pp\,\alpha_{n+1}(t)  + \qq\, \frac{ \alpha_{n+1}(t) }{ \beta_{n+1}(t) \beta_{n}(t)},\quad n\geq1.
\end{equation}
From this and setting the initial conditions $\beta_{0}(t)=1$ and $\alpha_{1}(t)= 0$, the relation (\ref{Eq-Toda-02}) of Theorem \ref{Thm-toda-general} holds.
 
Now from (\ref{Eq-Def-Lorthogonality}), we observe that
$
\mathcal{L}^{(t)} \left[ x^{-n}Q_{n}^2(x; t) \right] = \sigma_{n,n}(t), $ for  $ n \geq 0.
$
Differentiate $\sigma_{n,n}(t)$ with respect to $t$ and observing that 
 $\mathcal{L}^{(t)}[ x^{-n} \dot{Q}_{n}(x; t)$ $Q_{n}(x; t)] = 0$, it yields
\begin{equation} \label{auxiliar}
\dot{\sigma}_{n,n}(t)=  - \mathcal{L}^{(t)} \left[ x^{-n}\left(\pp\,x+\frac{\qq\,}{x}\right)Q_{n}^2(x; t) \right], \quad n \geq 0.
\end{equation}
Observe, from (\ref{Eq-Def-Lorthogonality}), that 
$
\mathcal{L}^{(t)} \left[ x^{-n-1}Q_{n}^2(x; t) \right] = a_{n,0}(t) \sigma_{n,-1}(t), $ for $ n \geq 0
$
and, from Lemma \ref{Lemma-01},  
\[
\mathcal{L}^{(t)} \left[ x^{-n+1}Q_{n}^2(x; t) \right] = a_{n,n-1}(t) \sigma_{n,n}(t) + \sigma_{n,n}(t) \sum_{k=1}^{n+1} [\alpha_{k+1}(t) + \beta_{k}(t)],
\]
for $n \geq 0,$ where we have taken $a_{0,-1}(t) = 0$. One can verify also from  (\ref{abzeros})  that 
\[
a_{n,n-1}(t) +  \sum_{k=1}^{n+1} [\alpha_{k+1}(t) + \beta_{k}(t)] =  \alpha_{n+2}(t) + \beta_{n+1}(t) + \alpha_{n+1}(t), \quad n \geq 0.
\]
Hence, using the above results, the equation (\ref{auxiliar}) can be given as
\[ 
-\pp\,\sigma_{n,n}(t)  \left[\alpha_{n+2}(t) + \beta_{n+1}(t) + \alpha_{n+1}(t) \right] - \qq\,  a_{n,0}(t) \sigma_{n,-1}(t)=  \dot{\sigma}_{n,n}(t),
\] 
for $n \geq 0$.  Thus, from (\ref{sigxi}) and (\ref{sigman}), we have
\[
 -\pp\, [\alpha_{2}(t) + \beta_{1}(t)+ \alpha_{1}(t)]  - \qq\, \frac{1}{\beta_{1}(t)} = \frac{\dot{\sigma}_{0,0}(t)}{\sigma_{0,0} (t)},  
\]
and, for $n \geq 1,$
\[
 -\pp\, \left[\alpha_{n+2}(t) + \beta_{n+1}(t) + \alpha_{n+1}(t)   \right] 
- \qq\, \frac{1}{\beta_{n+1}(t)} = \frac{\dot{\sigma}_{0,0}(t)}{\sigma_{0,0} (t)}+\sum_{k=2}^{n+1}\frac{\dot{\alpha}_k(t)}{\alpha_k(t)}. 
\]
Consequently the relation (\ref{Eq-Toda-01}) of Theorem \ref{Thm-toda-general} holds for $n\geq 2$. On the other hand, since $\beta_{0}(t)= 1$, $\alpha_{1}(t)=0$ and $\alpha_{0}(t)$ is arbitrary (but, we set $\alpha_{0}(t)=-1$)  the relation {\rm(\ref{Eq-Toda-01})} clearly holds for $n=1$.  

Finally, since  $\gamma_{n}(t) = \alpha_{n+1}(t) + \beta_{n}(t)$, for $n\geq1$  from (\ref{Eq-Toda-02}) and (\ref{Eq-Toda-01}), we easily obtain
equation (\ref{Eq-Toda-03}) of Theorem \ref{Thm-toda-general}. 

\setcounter{equation}{0}
\section{Lax pairs and the proof of Theorem \ref{Thm-Lpair-ETLE}}
\label{Sec-Lax-Lorth}

As in Nakamura \cite{Naka2005}, by considering the infinite matrices
\[ \hspace*{-1ex}
{\mathcal S} = \left(
\begin{array}{cccccc}
b_1    & 1      & 0      & \cdots & 0      & \cdots \\
a_2    & b_2    & 1      & \ddots & \vdots &        \\
0      & a_3    & b_3    & \ddots & 0      & \cdots \\
\vdots & \ddots & \ddots & \ddots & 1      & \ddots \\
0      & \cdots & 0      & a_n    & b_n    & \ddots \\
\vdots &        & \vdots & \ddots & \ddots & \ddots 
\end{array}\right), \quad
{\mathcal T}=\left(
\begin{array}{cccccc}
0      & 0      & \cdots & \cdots & 0      & \cdots \\
-a_2   & 0      & 0      &        & \vdots &        \\
0      & -a_3   & 0      & \ddots & \vdots &        \\
\vdots & \ddots & \ddots & \ddots & 0      & \cdots \\
0      & \cdots & 0      & -a_n   & 0      & \ddots \\
\vdots &        &\vdots  & \ddots & \ddots & \ddots 
\end{array}\right), 
\]
\noindent the Toda lattice equations (\ref{eqtod}) can also be represented in the matrix form 
\begin{equation} \label{laxpairOP}
\dot{{\mathcal S}} =  [{\mathcal S},{\mathcal T}] = {\mathcal S}\,{\mathcal T} - {\mathcal T}\,{\mathcal S}.
\end{equation}
The pair $\{{\mathcal S},{\mathcal T}\}$ is called a {\it Lax pair}  and (\ref{laxpairOP}) is called a {\it Lax representation} for the  Toda lattice (\ref{eqtod}). Another Lax pair for the  Toda lattice (\ref{eqtod}) can also be found in \cite{{Deift1999},{to1}}.

For the case of the finite relativistic Toda lattice equations (\ref{RTL1}) and (\ref{RTL2}) (i.e., $n=1, 2, \ldots, N$), we mentioned that in Suris \cite{Suris1996,Suris1997} (see also Coussement et al. \cite{CKVA2002} for a generalized form of the finite relativistic Toda lattice), by considering the bidiagonal matrices ${\mathcal M}_N$ and ${\mathcal P}_N$, given by
\[ 
\hspace*{-6ex}
{\mathcal M}_N =\left(
\begin{array}{cccccc}
\beta_1 & 1       & 0       & \cdots       & 0        \\
0       & \beta_2 & 1       &              & \vdots   \\
\vdots  & \ddots  & \ddots  & \ddots       & \vdots   \\
\vdots  &         & \ddots      &  \beta_{N-1} & 1        \\
0       & \cdots  & \cdots  & 0           & \beta_{N}
\end{array}\right), \
\mathcal {P}_{N} =\left(
\begin{array}{cccccc}
1       & 0       & 0      & \cdots    & 0      \\
-\alpha_2 & 1     & 0      &           & \vdots \\
\vdots  & \ddots  & \ddots & \ddots    & \vdots \\
\vdots  &         & \ddots & 1         & 0      \\
0       &  \cdots & \cdots& -\alpha_{N} & 1
\end{array}\right),
\]
\noindent it was shown that (\ref{RTL1}) can be written in the Lax form
\begin{equation} \label{GRTL}
\left\{\begin{array}{l}
 \dot{{\mathcal M}}_N =  {\mathcal M}_N{\mathcal A}_N-{\mathcal B}_N{\mathcal M}_N, \\[1.5ex]
 \dot{{\mathcal P}}_N =  {\mathcal P}_N{\mathcal A}_N-{\mathcal B}_N{\mathcal P}_N,
\end{array}\right.
\end{equation}
with ${\mathcal A}_N =-({\mathcal M}^{-1}_N{\mathcal P}_N)_{-}$ and $ \mathcal {B}_{N} =-({\mathcal P}_N{\mathcal M}^{-1}_N)_{-}$, where $Z_{-}$ denotes the strictly lower triangular part of $Z$. Moreover, by considering ${\mathcal A}_N =-({\mathcal P}^{-1}_N{\mathcal M}_N)_{-}$ and $ \mathcal {B}_{N} =-({\mathcal M}_N{\mathcal P}^{-1}_N)_{-}$, it was also shown that system (\ref{RTL2}) can also be written in the form (\ref{GRTL}). Notice that in neither case we have a Lax pair of the form (\ref{laxpairOP}) for the finite relativistic Toda lattice equations (\ref{RTL1}) and (\ref{RTL2}).

The aim of this section is to present a Lax pair of the form (\ref{laxpairOP}) for the extended relativistic Toda lattice equations (\ref{Eq-Toda-01}) and (\ref{Eq-Toda-03}).

Let us first consider the extended relativistic Toda lattice equations of the finite order  
\begin{equation}  \label{newTodagamma-N}
\dot{\gamma}_{n} =  \pp\left(\alpha_{n}\gamma_{n} - \alpha_{n+1} \gamma_{n+1}\right) + \qq\left( \frac{ \alpha_{n+1} }{\gamma_{n} - \alpha_{n+1}} -  \frac{ \alpha_{n} }{\gamma_{n-1} - \alpha_{n}} \right), 
\end{equation}
and
\begin{equation}  \label{newTodaxi-N}
\dot{\alpha}_{n} =   \pp \alpha_{n} \left( \alpha_{n-1} +\gamma_{n-1} - \alpha_{n}-\gamma_{n}\right) 
+  \qq \alpha_{n} \left( \frac{1}{\gamma_{n-1} - \alpha_{n}} - \frac{1}{\gamma_{n} - \alpha_{n+1}} \right),
\end{equation}
for $n =1,2, \ldots, N$,
where $\gamma_{n}(t) = \alpha_{n+1}(t) + \beta_{n}(t)$, $\beta_{0}(t)=1$, $\alpha_{0}(t)= -1$  and $\alpha_{1}(t)=0$, with the additional assumption $\alpha_{N+1}(t) = 0$. Observe that with $\alpha_{N+1}(t)= 0$ the product $\alpha_{N+1}(t) \gamma_{N+1}(t)$ that appears in (\ref{newTodagamma-N}) is also zero. 

For $N \geq 1$, let the  $N \times N$ matrices $\mathcal{H}_N = \mathcal{H}_N(t)$, $\mathcal{X}_{N} = \mathcal{X}_{N}(t)$ and $\mathcal{Y}_{N} = \mathcal{Y}_{N}(t)$ be given by 
\[
{\mathcal H}_N=\!\left(
\begin{array}{cccccc}
\gamma_1 & \gamma_2 & \gamma_3 & \cdots     & \gamma_{N-1} & \gamma_{N} \\
\alpha_2   & \gamma_2 & \gamma_3 & \cdots     & \gamma_{N-1} & \gamma_{N} \\
0        & \alpha_3   & \gamma_3 & \cdots     & \gamma_{N-1} & \gamma_{N} \\
\vdots   & \ddots   & \ddots   & \ddots     & \vdots       & \vdots     \\
0        & \cdots   & 0        & \alpha_{N-1} & \gamma_{N-1} & \gamma_{N} \\
0        & \cdots   & 0        & 0          & \alpha_{N}     & \gamma_{N}
\end{array}\right),  \quad
\mathcal {X}_{N}=\!\left(\!
\begin{array}{cccccc}
\alpha_1  & 0       & 0      & \cdots      & 0          & 0      \\
-\alpha_2 & \alpha_2  & 0      & \cdots      & 0          & 0      \\
 0      & -\alpha_3 & \alpha_3 & \ddots      & \vdots     & \vdots \\
\vdots  & \ddots  & \ddots & \ddots      & 0          & 0      \\
0       & \cdots  & 0      & -\alpha_{N-1} & \alpha_{N-1} & 0      \\
0       & \cdots  & 0      & 0           & -\alpha_{N}  & \alpha_{N}
\end{array}\right)
\]
and
\[
\mathcal {Y}_{N} =\left(
\begin{array}{cccccc}
\frac{1}{\gamma_1-\alpha_2} & -\frac{1}{\gamma_1-\alpha_2} & 0 &\cdots &      0 &   0     \\[0ex]
0  & \frac{1}{\gamma_2-\alpha_3}   & -\frac{1}{\gamma_{2}-\alpha_{3}}& \ddots & \vdots & \vdots  \\[0ex]
 0  &   0  &  \ddots  &   \ddots         & 0 & 0      \\
 \vdots  &   \ddots  & \ddots    &   \frac{1}{\gamma_{N-2}-\alpha_{N-1}}         & -\frac{1}{\gamma_{N-2}-\alpha_{N-1}}  & 0      \\[1ex]
   0   &  \cdots   & 0   & 0 &    \frac{1}{\gamma_{N-1}-\alpha_{N}}   & -\frac{1}{\gamma_{N-1}-\alpha_{N}}        \\[1ex]
    0   &  \cdots    & 0   &     0 & 0 & \frac{1}{\gamma_{N}-\alpha_{N+1}}  
\end{array}\right). 
\]
The matrix $\mathcal{H}_N$ has already been shown to be interesting in the studies related to L-orthogonal polynomials.  From results first appeared in \cite{SR-A-CATCF92} (see also \cite{daSilvaRanga2005,SRVA2002} on further studies), the zeros of $Q_N(x; t)$ are exactly the eigenvalues of the Hessenberg matrix  $\mathcal{H}_N(t)$. 

Now with the above matrices by performing the respective matrix multiplications one easily finds that 
\[
\dot{\mathcal{H}}_{N} = \pp \left(\mathcal{H}_{N} \mathcal{X}_{N} - \mathcal{X}_{N} \mathcal{H}_{N}\right)  + \qq \left(\mathcal{H}_{N} \mathcal{Y}_{N} - \mathcal{Y}_{N} \mathcal{H}_{N}\right),  
\] 
for any $N \geq 1$.  Hence, we can state the following theorem. 

\begin{theorem} \label{Thm-Finite-Lpair-ETLE}  
A Lax representation for the  extended relativistic Toda lattice equations of finite order $N$ $(N \geq 1)$ given by $(\ref{newTodagamma-N})$  and $(\ref{newTodaxi-N})$ is 
\[
 \dot{\mathcal{H}}_{N}  = [\mathcal{H}_{N}, \mathcal{F}_{N}] = \mathcal{H}_{N} \mathcal{F}_{N} - \mathcal{F}_{N} \mathcal{H}_{N},
\]
where $\mathcal{F}_{N} = \pp \mathcal{X}_{N} + \qq \mathcal{Y}_{N}$, and given in $(\ref{Eq-LaxPairHF})$.
\end{theorem}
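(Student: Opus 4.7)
The plan is to verify the commutator identity $\dot{\mathcal H}_N = [\mathcal H_N, \mathcal F_N]$ entry by entry, exploiting three structural facts: $\mathcal H_N$ is upper Hessenberg with the rigid pattern $(\mathcal H_N)_{i,j} = \gamma_j$ for $j \geq i$ and $(\mathcal H_N)_{i,i-1} = \alpha_i$; $\mathcal X_N$ is lower bidiagonal with $(\mathcal X_N)_{i,i} = \alpha_i$ and $(\mathcal X_N)_{i,i-1} = -\alpha_i$; and $\mathcal Y_N$ is upper bidiagonal with $(\mathcal Y_N)_{i,i} = 1/\beta_i$ and $(\mathcal Y_N)_{i,i+1} = -1/\beta_i$. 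Since $\mathcal F_N = \pp \mathcal X_N + \qq \mathcal Y_N$ is linear in $\pp$ and $\qq$, and the right-hand sides of (\ref{newTodagamma-N}) and (\ref{newTodaxi-N}) split the same way, it suffices to show separately that $[\mathcal H_N, \mathcal X_N]$ produces the $\pp$-coefficients and $[\mathcal H_N, \mathcal Y_N]$ the $\qq$-coefficients inside $\dot{\mathcal H}_N$.

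For the $\pp$-commutator, the bidiagonal shape of $\mathcal X_N$ gives the explicit formulas
\[
 (\mathcal X_N \mathcal H_N)_{i,j} = \alpha_i\bigl((\mathcal H_N)_{i,j} - (\mathcal H_N)_{i-1,j}\bigr), \qquad (\mathcal H_N \mathcal X_N)_{i,j} = \alpha_j (\mathcal H_N)_{i,j} - \alpha_{j+1}(\mathcal H_N)_{i,j+1},
\]
with the usual convention that indices outside $1,\dots,N$ are omitted. For $j \geq i$ rows $i-1$ and $i$ of $\mathcal H_N$ coincide in column $j$ (both equal $\gamma_j$), so the $\mathcal X_N \mathcal H_N$ term contributes nothing and the commutator collapses to $\alpha_j\gamma_j - \alpha_{j+1}\gamma_{j+1}$, matching the $\pp$-coefficient of $\dot\gamma_j$. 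On the subdiagonal, the four relevant entries combine to give $\alpha_i(\alpha_{i-1} + \gamma_{i-1} - \alpha_i - \gamma_i)$, matching the $\pp$-coefficient of $\dot\alpha_i$ in (\ref{newTodaxi-N}). Entries with $j \leq i-2$ vanish because the only nontrivial contribution, at $(i,i-2)$, cancels directly.

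The $\qq$-commutator follows the same row-versus-column pattern applied to $\mathcal Y_N$: for $j \geq i+1$ rows $i$ and $i+1$ of $\mathcal H_N$ coincide in column $j$, so $\mathcal Y_N \mathcal H_N$ vanishes there, and the commutator reduces to $\gamma_j/\beta_j - \gamma_{j-1}/\beta_{j-1}$. The key algebraic identity $\gamma_k/\beta_k = \alpha_{k+1}/\beta_k + 1$, which is just a rewriting of $\gamma_k = \alpha_{k+1} + \beta_k$, converts this into $\alpha_{j+1}/\beta_j - \alpha_j/\beta_{j-1}$, i.e.\ the $\qq$-coefficient of $\dot\gamma_j$ in (\ref{Eq-Toda-03}). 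On the main diagonal, the constant $(\gamma_i - \alpha_{i+1})/\beta_i = 1$ produced by $(\mathcal Y_N \mathcal H_N)_{i,i}$ cancels the $+1$ coming from the same conversion of $(\mathcal H_N \mathcal Y_N)_{i,i}$, leaving the correct expression; the subdiagonal entry reduces directly to $\alpha_i(1/\beta_{i-1} - 1/\beta_i)$.

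The real obstacle is not a conceptual one but careful bookkeeping of the boundary entries: the first row, where the initial condition $\alpha_1 = 0$ annihilates an otherwise spurious term in $(\mathcal X_N \mathcal H_N)_{1,j}$; the last column, where the finite-order assumption $\alpha_{N+1} = 0$ plays the analogous role; and the verification that the combined Hessenberg-plus-bidiagonal shape forces every entry of the two commutators outside the Hessenberg band to vanish without any extra constraints on the sequences $\{\alpha_n\}$ and $\{\beta_n\}$. Once these boundary cases are dispatched, the theorem follows, and the infinite-dimensional Lax pair of Theorem \ref{Thm-Lpair-ETLE} is recovered by letting $N$ grow, since the commutator relations are verified entrywise.
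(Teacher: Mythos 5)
Your proposal is correct and matches the paper's own argument, which simply asserts that ``performing the respective matrix multiplications one easily finds'' $\dot{\mathcal{H}}_{N} = \pp\,[\mathcal{H}_{N},\mathcal{X}_{N}] + \qq\,[\mathcal{H}_{N},\mathcal{Y}_{N}]$; you carry out exactly this entrywise verification, splitting by linearity in $\pp$ and $\qq$. Your explicit treatment of the boundary entries (where $\alpha_1 = 0$ and $\alpha_{N+1} = 0$ are needed) supplies detail the paper leaves implicit, but the route is the same.
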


To prove Theorem \ref{Thm-Lpair-ETLE} one only needs to let  $N \to \infty$ in Theorem \ref{Thm-Finite-Lpair-ETLE}. 

\setcounter{equation}{0}
\section{From L-orthogonal polynomials on the positive real axis}
\label{Sec-LOrthogonal-RL}

In this section we consider the case in which the moment functional $\mathcal{L}$ is given by 
\[
      \mathcal{L}[f] = \int_{a}^{b} f(x) d \psi(x),
\]
where $0 \leq  a < b \leq \infty$ and $\psi$ is a strong positive measure defined on $[a, b]$.  With the term ``strong'' 	we mean that the moments $\mathcal{L}[x^n] = \nu_{n}^{(0)}$ exists for all $n \in \mathbb{Z}$. The existence of the moments $\mathcal{L}^{(t)}[x^n] = \nu_{n}^{(t)}$, for all $n \in \mathbb{Z}$, depends on the choice of $\pp$ and $\qq$, especially if $a=0$ and/or $b=\infty$. 

Clearly, the choice $\pp> 0$ and $\qq>0$, that we will assume throughout in this section, guarantees the existence of all the moments since it is easily verified that 
\[
   \int_{a}^{b} x^{n} e^{-t\left(\pp x + \frac{\qq}{x}\right)}  d \psi(x) 
   \leq e^{-2t\sqrt{\pp \qq}} \int_{a}^{b} x^{n}   d \psi(x).    
\]
With the existence of the moments,  the determinantal conditions in (\ref{Eq-newHankelCond}) also hold since $e^{-t\left(\pp x + \qq/x\right)}  d \psi(x)$ leads to a positive measure in $[a,b]$. To be precise, we have $ H_{n}^{(-n)}(t) > 0 $ and $ H_{n+1}^{(-n)}(t) > 0,$
for $n \geq 0$. We can also, without any loss of generality, consider $\mathcal{L}^{(t)}$ in the form 
\begin{equation} \label{Eq-MF-TodaModification-PRL}
    \mathcal{L}^{(t)}[f(x)] = \mathcal{L}[f(x) e^{-t\left(x + \frac{\qq}{x}\right)}],  
\end{equation}
with $\qq > 0$, by absorbing the positive $\pp$ into the parameter $t$. 

If we consider the L-orthogonal polynomials $Q_{n}(x; t)$ with respect to this moment functional  then from the positiveness of  $e^{-t\left(x + \qq/x\right)}  d \psi(x)$ one can  state that (see \cite{JTW1980})  the coefficients  in the associated recurrence relation (\ref{eqrrttj}) satisfy  $\beta_{n}(t) > 0$ and  $ \alpha_{n+1}(t) > 0, \ n \geq 1.$

On the other hand,  by Theorem \ref{Thm-toda-general}  these coefficients satisfy the extended relativistic Toda lattice equations 
\begin{equation} \label{Eq-Toda-02-PRL}
\dot{\beta}_{n} =   \beta_{n} \left( \alpha_{n} - \alpha_{n+1} \right) +
\qq\, \beta_{n} \left(\frac{\alpha_{n+1}}{\beta_{n+1}\beta_{n}} - \frac{\alpha_{n}}{\beta_{n}\beta_{n-1}} \right)
\end{equation}
and
\begin{equation} \label{Eq-Toda-01-PRL}
\dot{\alpha}_{n} =    \alpha_{n} \left( \alpha_{n-1} + \beta_{n-1}-\alpha_{n+1} - \beta_{n} \right)
+ \qq\,  \alpha_{n} \left( \frac{1}{\beta_{n-1}} - \frac{1}{\beta_{n}} \right),
\end{equation}
for $n \geq 1$, with the initial conditions $\beta_{0}(t)= 1$, $\alpha_{0}(t)= -1$  and $\alpha_{1}(t)= 0$. 

We now analyse results corresponding to such $\mathcal{L}^{(t)}$ with two particular examples.  \\[0ex] 

\noindent {\bf Example 1}\ \ For $\delta > 0$, let the moment functional $\mathcal{L}$ be given by 
$
  \mathcal{L}[f(x)] = \int_{0}^{\infty} f(x) \, d \psi(x),
$
where  $d \psi(x) = x^{-\frac{1}{2}} e^{-\delta(x + \qq/x)} dx$. Then, the moment functional $\mathcal{L}^{(t)}$ defined as in (\ref{Eq-MF-TodaModification-PRL}) satisfies  
\[ 
  \mathcal{L}^{(t)}[f(x)] = \int_{0}^{\infty} f(x) \, d \psi^{(t)}(x),
\]
where $d \psi^{(t)}(x) = x^{-\frac{1}{2}} e^{-(t+\delta)(x + \qq/x)} dx$.  

Considering the L-orthogonal polynomials $Q_{n}(x; t)$ with respect to this moment functional  we find that the coefficients of the associated three term recurrence relation (\ref{eqrrttj}) satisfy 
\begin{equation} \label{Eq-Example-1}
     \beta_{n}(t) = \sqrt{\qq} \quad \mbox{and} \quad \alpha_{n+1}(t) = \frac{n}{2(t+\delta)}, \quad n \geq 1.
\end{equation}
This follows from results given in \cite{Ranga-PAMS1995} (p. $3139$).

Substitution of the values (\ref{Eq-Example-1}) in the right hand sides of (\ref{Eq-Toda-02-PRL}) and (\ref{Eq-Toda-01-PRL}) we find  $ \dot{\beta}_{n} = 0,$   for $n \geq 1$,  and
\[ 
\dot{\alpha}_{n} = \displaystyle \alpha_{n} \left( \alpha_{n-1} + \beta_{n-1}-\alpha_{n+1} - \beta_{n} \right)
+ \qq\,  \alpha_{n} \left( \frac{1}{\beta_{n-1}} - \frac{1}{\beta_{n}} \right) = 
-\frac{n-1}{2(t+\delta)^2}, \quad n \geq 2. 
\]
The values obtained here for $\dot{\beta}_{n}$ and $\dot{\alpha}_{n}$ are what we obtain by direct differentiation in (\ref{Eq-Example-1}). 

It turns out that the measure $\psi$ in Example 1 is such that 
\begin{equation} \label{Eq-symmetry1}
    \frac{d \psi(x)}{\sqrt{x}} = - \frac{d \psi(\qq/x)}{\sqrt{\qq/x}}.
\end{equation}
Consequently, the measure $\psi^{(t)}$ in Example 1 also satisfies the symmetric property (\ref{Eq-symmetry1}).

It is known  (see, for example, \cite{Ranga-PAMS1995,SRAM}) that under this symmetric property the coefficients $\beta_{n}$ in the three term recurrence (\ref{eqrrttj}) must satisfy $\beta_{n}(t) = \sqrt{\qq}$, $n \geq 1$. 

In general, if we start with any strong measure $\psi$ that satisfies the symmetry (\ref{Eq-symmetry1}) and then proceed to create the linear functional $\mathcal{L}$ and to build the moment functionals $\mathcal{L}^{(t)}$ as in (\ref{Eq-MF-TodaModification-PRL}), then $\beta_{n}(t) =  \sqrt{\qq}$, $n \geq 1$.  Moreover,  for $\alpha_{n}(t)$ we obtain $\alpha_{n}(t) > 0$, $n \geq 2$ and from (\ref{Eq-Toda-01-PRL})  
\[
 \dot{\alpha}_{n}(t) = \alpha_{n}(t)  \left[\alpha_{n-1}(t) - \alpha_{n+1}(t)  \right], \quad n \geq 2,
\]
with $\alpha_1(t) =0$, which is known as Langmuir or Volterra lattice (see \cite{Peh2001}). \\

\noindent{\bf Example 2}\ \ For $\delta > 0$, let the moment functional $\mathcal{L}$ be given by 
$ 
  \mathcal{L}[f(x)] = \int_{0}^{\infty} f(x) \, d \tilde{\psi}(x),
$
where  $d \tilde{\psi}(x) = (x + \sqrt{\qq}) x^{-\frac{3}{2}} e^{-\delta(x + \frac{\qq}{x})} dx$. Then the moment functional $\mathcal{L}^{(t)}$ defined as in (\ref{Eq-MF-TodaModification-PRL}) satisfies  
\begin{equation} \label{Eq-Exemple-2}
  \mathcal{L}^{(t)}[f(x)] = \int_{0}^{\infty} f(x) \, d \tilde{\psi}^{(t)}(x),
\end{equation}
where $d \tilde{\psi}^{(t)}(x) = (x + \sqrt{\qq}) x^{-\frac{3}{2}} e^{-(t+\delta)(x + \frac{\qq}{x})} dx$.  

Let us denote the coefficients in the three term recurrence relation  (\ref{eqrrttj}) with respect to the moment functional $\mathcal{L}^{(t)}$ in (\ref{Eq-Exemple-2}) as $\tilde{\alpha}_{n}(t)$ and $\tilde{\beta}_{n}(t)$. The measures $\tilde{\psi}^{(t)}$ can be verified to satisfy the symmetry 
\[
   d\tilde{\psi}^{(t)}(x) = - d\tilde{\psi}^{(t)}(\qq/x),
\]
and further $d\tilde{\psi}^{(t)}(x)  = \frac{x+\sqrt{\qq}}{x} d\psi^{(t)}(x)$, where $\psi^{(t)}$ are measures given in Example 1. Consequently, using results found in \cite{SR96}, we obtain 
\[
  \tilde{\beta}_{n} (t) = \frac{l_{n-1}(t)}{l_{n}(t)}, \quad \tilde{\alpha}_{n+1}(t) =  \tilde{\beta}_{n}(t)\,[l_{n}^2(t) - 1] , \quad n \geq 1, 
\]
where $ l_{n}(t) = \displaystyle{1 + \frac{n/[2\sqrt{\qq}(t + \delta)]}{l_{n-1}(t)+1}}, \; n \geq 1 \; $ and $\;  l_{0}(t)=1$.  These values for $\tilde{\beta}_{n}$ and $\tilde{\alpha}_{n}$, together with the values for $\dot{\tilde{\beta}}_{n}$ and $\dot{\tilde{\alpha}}_{n}$ obtained from these, can be successively substituted in 
\[
  \begin{array}{l}
   \displaystyle \dot{\tilde{\beta}}_{n} =   \tilde{\beta}_{n} \left( \tilde{\alpha}_{n} - \tilde{\alpha}_{n+1} \right) +
 \qq\, \tilde{\beta}_{n} \left(\frac{\tilde{\alpha}_{n+1}}{\tilde{\beta}_{n+1}\tilde{\beta}_{n}} - \frac{\tilde{\alpha}_{n}}{\tilde{\beta}_{n}\tilde{\beta}_{n-1}} \right), \\[3ex]
  \displaystyle \dot{\tilde{\alpha}}_{n} =   \tilde{\alpha}_{n} \left( \tilde{\alpha}_{n-1} + \tilde{\beta}_{n-1}-\tilde{\alpha}_{n+1} - \tilde{\beta}_{n} \right)
+  \qq\,  \tilde{\alpha}_{n} \left( \frac{1}{\tilde{\beta}_{n-1}} - \frac{1}{\tilde{\beta}_{n}} \right), 
  \end{array} \quad  n \geq 1, 
\]
to verify the validity of these extended relativistic Toda lattice equations.  


\setcounter{equation}{0}
\section{From kernel polynomials on the unit circle}
\label{Sec-CDKernels-UC}

Let  $\mu$ be a positive measure defined on the unit circle $\mathbb{T}=\{z=e^{i\theta} : 0 \leq \theta \leq 2\pi \}$ and let 
\[
  \mathcal{L}[f] = \int_{\mathbb{T}} f(z) (z-w)d \mu(z) \quad \mbox{and}  \quad \mathcal{L}^{(t)}[f] = \int_{\mathbb{T}} f(z) (z-w)d\mu^{(t)}(z),
\]
where 
\[
    d\mu^{(t)}(z) = e ^{-t \left(\overline{\qq}z+ \qq/z \right)}   d \mu(z) = e^{-2t[\Re(\qq)\cos \theta + \Im(\qq) \sin \theta]} d \mu(e^{i\theta}).   
\]
For convention and also for convenience  we have replaced $x$ by $z$.

Clearly, $\mu^{(t)}$ is a well defined positive measure on the unit circle  for any $t$ real. Thus, from now on consider $t \in (-\infty, \infty)$. We denote the $n^{th}$ degree monic orthogonal polynomial and  orthonormal polynomial  associated with  the measure $\mu^{(t)}$ by $\Phi_{n}(z; t)$ and $\varphi_{n}(z; t)$, respectively (see \cite{Simon-book-p1}).  We also denote the associated Verblunsky coefficients by $\Va_{n}(t)$. That is,
$
     \Va_{n}(t) = - \overline{\Phi_{n+1}(0; t)},$ $n \geq 1. 
$

Now, with a fixed $w$ such that $|w|=1$ and fixed $t$, we consider the L-orthogonal polynomials $Q_{n}(z;t)$ defined  by (\ref{Eq-Def-Lorthogonality}) with $z$ in the place of $x$. The existence of these polynomials and that they satisfy the three term recurrence  (\ref{eqrrttj}), i.e., 
\begin{equation} \label{eqrrttj-CDkerenl1}
    Q_{n+1}(z; t) = [z - \beta_{n+1}(t)] Q_{n}(z; t) - \alpha_{n+1}(t)z Q_{n-1}(z; t), \quad n \geq 1,
\end{equation}
with $Q_{0}(z; t) = 1$ and $Q_{1}(z; t) = z- \beta_{1}(t)$, follow from results given in \cite{CostaFelixRanga-JAT2013}.

These polynomials are actually the monic kernel polynomials (or monic CD kernel as in Simon \cite{Simon-book-p1}) with respect to the measure $\mu^{(t)}$. Precisely, we have
\[
  \kappa_{n} \overline{\varphi_{n}(w;t)}\, Q_{n}(z; t) =  K_{n}^{(t)}(z,w) = \sum_{j=0}^{n} \overline{\varphi_{j}(w;t)} \,\varphi_{j}(z;t), \quad n \geq 1,
\]
where $\kappa_{n}^{-2} = \mu_{0}^{(t)} \prod_{j=0}^{n-1}(1-|\Va_{j}(t)|^2)$ and $\mu_{0}^{(t)} =\int_{\mathbb{T}} d\mu^{(t)}(z)$.  Moreover, for the coefficients $\beta_{n}(t) = \beta_{n}(w,t)$ and $\alpha_{n}(t) = \alpha_{n}(w,t)$ we have (see \cite{CostaFelixRanga-JAT2013} (Thm. $2.1$)), 
\[
   \beta_{n}( t) = - \frac{\rho_{n}^{(t)}(w)}{\rho_{n-1}^{(t)}(w)}, \quad \alpha_{n+1}(t) = [1+\rho_{n}^{(t)}(w) \Va_{n-1}(t)]\,[1 - \overline{w\rho_{n}^{(t)}(w) \Va_{n}(t)}\,] w,
\]
for $n \geq 1$, where $\rho_{n}^{(t)}(w) = \Phi_{n}(w;t)/ \Phi_{n}^{\ast}(w;t)$, $n \geq 0$, and $\Phi_{n}^{\ast}(w;t) = w^{n} \overline{\Phi_n(1/\bar{w};t)}$ denotes the reciprocal polynomial of $\Phi_n(w;t)$.

Clearly, by Theorem \ref{Thm-toda-general} the coefficients $\beta_n(t)$ and $\alpha_{n}(t)$ satisfy the extended relativistic Toda lattice equations (\ref{Eq-Toda-02}) and (\ref{Eq-Toda-01}), with $\pp = \overline{\qq}$. 

Observe that, different from the results presented in Section \ref{Sec-LOrthogonal-RL}, the  coefficients $\beta_n(t)$ and $\alpha_{n}(t)$ now are complex valued. However, by taking $w=1$ we can write the three term recurrence (\ref{eqrrttj-CDkerenl1}) in a different form which involves only real coefficients. 

Let 
\begin{equation} \label{parametricSeq1}
 g_{n}(t) = \frac{1}{2} \frac{\big|1 - \rho_{n-1}^{(t)} \Va_{n-1}(t)\big|^2} 
     {\big[1 - \Re[\rho_{n-1}^{(t)}\Va_{n-1}(t)]\big]}, \quad n \geq 1,
\end{equation}
where $\rho_{j}^{(t)} = \rho_{j}^{(t)}(1)$. It is easy to check that all $g_n(t) \in (0,1)$, hence the terms of the following sequence are all positive
\[
    \xi_n(t) = \xi_0(t) \prod_{j=1}^{n}\left(1-g_j(t)\right), \quad n \geq 1, \qquad \xi_0(t) :=   \int_{\mathbb{T}} d\mu^{(t)}(z).
\]
With this notation we introduce the normalized CD kernels by
%
$    R_n(z; t) := \xi_n(t)$ $K_{n}^{(t)}(z,1),$ $n \geq 0. $

It turns out (see \cite{CostaFelixRanga-JAT2013}) that these kernel polynomials, $R_n(z; t)$, satisfy the following three term recurrence relation
\begin{equation} \label{Eq-TTRR-Rn1}
R_{n+1}(z; t) = [(1+ic_{n+1}(t))z + (1-ic_{n+1}(t))]\,R_{n}(z; t) - 4d_{n+1}(t) z\,R_{n-1}(z; t), 
\end{equation}
for $n \geq 1$, with $R_{0}(z; t) = 1$ and $R_{1}(z; t)=(1+ic_{1}(t))z+(1-ic_{1}(t))$, where both $\{c_n(t)\}_{n \geq 1}$ and $\{d_{n+1}(t)\}_{n \geq 1}$ are  real sequences. In fact,
%
\[
c_{n}(t) = \frac{\Im (\rho_{n-1}^{(t)}\Va_{n-1}(t))} {\Re(\rho_{n-1}^{(t)}\Va_{n-1}(t))-1} \in \mathbb{R}
\quad \mbox{and} \quad 
d_{n+1}(t) = [1-g_{n}(t)] g_{n+1}(t), \quad n \geq 1,
\]
with $g_n(t)$ given by (\ref{parametricSeq1}). In the standard terminology, this means that $\{d_{n+1}(t)\}_{n \geq 1}$ is a positive chain sequence for any $t$, and $\{g_{n+1}(t)\}_{n\geq 0}$  is a parameter sequence  for  $\{d_{n+1}(t)\}_{n \geq 1}$ (for more details on chain sequences see, for example, \cite{Chihara-book}).  

Now, by using Theorem \ref{Thm-toda-general}, we can state the following.

\begin{theorem}\label{todalortcor} 
The coefficients $c_{n}(t)$ and $d_{n}(t)$ of the three term recurrence relation $(\ref{Eq-TTRR-Rn1})$ satisfy
\[ 
\begin{array}{rcl}
   \dot{c}_{1} &=& \displaystyle  -4\Re(\qq) \left[\frac{d_2\left(c_1+c_{2}\right)}{1+c^{2}_{2}}\right] - 4\Im(\qq)\left[\frac{d_2\left(1-c_{1}c_{2}\right)}{1+c^{2}_{2}}\right]
\end{array}
\] 
and, for $n \geq 2$, 
\[ 
\begin{array}{rcl}
   \dot{c}_{n} &=& \displaystyle  4\Re(\qq)\left[\frac{d_n\left(c_n +c_{n-1}\right)}{1+c^2_{n-1}}-\frac{d_{n+1}\left(c_n+c_{n+1}\right)}{1+c^2_{n+1}}\right] \\[3.5ex]
& & \displaystyle  + \ 4\Im(\qq)\left[\frac{d_n\left(1-c_nc_{n-1}\right)}{1+c^2_{n-1}}-\frac{d_{n+1}\left(1-c_nc_{n+1}\right)}{1+c^2_{n+1}}\right], 
\end{array}
\]
and 
\[
\begin{array}{rcl}
\dot{d}_{n} & = & \displaystyle   4\Re(\qq)\left[\frac{d_nd_{n-1}}{1+c^2_{n-2}} -\frac{d_nd_{n+1}}{1+c^2_{n+1}} + \frac{d_n\left(1-d_n\right)\left(c^2_{n-1}-c^2_{n}\right)}{\left(1+c^2_{n}\right)\left(1+c^2_{n-1}\right)} \right] \\[3.5ex]
& & \displaystyle  - \ 4\Im(\qq) \left[\frac{d_n d_{n-1} c_{n-2}}{1+c^2_{n-2}} - \frac{d_n d_{n+1} c_{n+1}}{1+c^2_{n+1}} + \frac{d_n\left(1-d_n\right)\left(c_{n}-c_{n-1}\right)\left(1-c_{n}c_{n-1}\right)}{\left(1+c^2_{n}\right)\left(1+c^2_{n-1}\right)}\right],
\end{array}
\]
with $c_{0}(t) = 1$ and $d_{1}(t) = 0$.  Here, we have also omitted the time variable $t$.
\end{theorem}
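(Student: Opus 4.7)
The plan is to translate the extended relativistic Toda lattice equations of Theorem~\ref{Thm-toda-general} (specialised to $\pp=\overline{\qq}$) directly into the coordinates $(c_n,d_n)$ by matching the two three-term recurrences (\ref{eqrrttj-CDkerenl1}) and (\ref{Eq-TTRR-Rn1}). An easy induction on (\ref{Eq-TTRR-Rn1}) shows that $R_n(z;t)$ has leading coefficient $\prod_{k=1}^{n}(1+ic_k(t))$; since $Q_n(z;t)$ is the corresponding monic kernel polynomial, $Q_n(z;t)=R_n(z;t)\big/\prod_{k=1}^{n}(1+ic_k(t))$. Dividing (\ref{Eq-TTRR-Rn1}) by this product and comparing with (\ref{eqrrttj-CDkerenl1}) gives the identifications
\[
\beta_n(t)= -\frac{1-ic_n(t)}{1+ic_n(t)},\qquad \alpha_{n+1}(t)=\frac{4\,d_{n+1}(t)}{(1+ic_{n+1}(t))(1+ic_n(t))}.
\]

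These formulas immediately yield $|\beta_n|=1$, so $1/\beta_n=\overline{\beta_n}$, and since $d_n$ is real, $\alpha_{n+1}/(\beta_{n+1}\beta_n)=\overline{\alpha_{n+1}}$ and $\alpha_n/(\beta_n\beta_{n-1})=\overline{\alpha_n}$. Invoking $\pp=\overline{\qq}$, the $\qq$-contribution in (\ref{Eq-Toda-02}) becomes $-\beta_n\overline{\pp(\alpha_n-\alpha_{n+1})}$, so the right-hand side collapses to the purely imaginary quantity $2i\beta_n\,\Im[\pp(\alpha_n-\alpha_{n+1})]$. Similarly, after dividing (\ref{Eq-Toda-01}) by $\alpha_n$, the $\qq$-contribution equals $\overline{\pp(\beta_{n-1}-\beta_n)}$, whence $\dot\alpha_n/\alpha_n=\pp(\alpha_{n-1}-\alpha_{n+1})+2\,\Re[\pp(\beta_{n-1}-\beta_n)]$.

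Logarithmic differentiation of the expressions for $\beta_n$ and $\alpha_n$ above yields $\dot\beta_n/\beta_n = -2i\dot c_n/(1+c_n^2)$ and $\dot\alpha_n/\alpha_n = \dot d_n/d_n - i\dot c_n/(1+ic_n) - i\dot c_{n-1}/(1+ic_{n-1})$. Equating the simplified $\dot\beta_n$-equation gives $\dot c_n=(1+c_n^2)\,\Im[\pp(\alpha_{n+1}-\alpha_n)]$, and evaluating $\Im[\pp\alpha_k]$ via $(1+ic_k)(1+ic_{k-1})=(1-c_kc_{k-1})+i(c_k+c_{k-1})$ (whose modulus squared is $(1+c_k^2)(1+c_{k-1}^2)$) produces the stated formulas for $\dot c_n$; the $n=1$ case is the degenerate one with $\alpha_1=0$. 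Taking real parts of the $\dot\alpha_n$-equation, using $\Re[-i\dot c_k/(1+ic_k)]=-c_k\dot c_k/(1+c_k^2)$, then gives
\[
\frac{\dot d_n}{d_n}=\frac{c_n\dot c_n}{1+c_n^2}+\frac{c_{n-1}\dot c_{n-1}}{1+c_{n-1}^2}+\Re[\pp(\alpha_{n-1}-\alpha_{n+1})]+2\,\Re[\pp(\beta_{n-1}-\beta_n)].
\]

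The main obstacle is the algebraic simplification of this last expression into the form stated in Theorem~\ref{todalortcor}. A direct computation shows $\Re(\beta_{n-1}-\beta_n)=2(c_{n-1}^2-c_n^2)/D$ and $\Im(\beta_{n-1}-\beta_n)=2(c_{n-1}-c_n)(1-c_nc_{n-1})/D$, with $D=(1+c_n^2)(1+c_{n-1}^2)$. After substituting $\dot c_n,\dot c_{n-1}$ and grouping by common denominator, the telescoping identities
\[
c_{n-1}(c_{n-1}+c_{n-2})+(1-c_{n-1}c_{n-2})=1+c_{n-1}^2, \quad c_{n-1}(1-c_{n-1}c_{n-2})-(c_{n-1}+c_{n-2})=-c_{n-2}(1+c_{n-1}^2),
\]
and their analogues at level $n+1$ collapse the terms sharing the factor $d_{n-1}/[(1+c_{n-1}^2)(1+c_{n-2}^2)]$ into $d_n d_{n-1}/(1+c_{n-2}^2)$ and $d_n d_{n-1}c_{n-2}/(1+c_{n-2}^2)$, and the terms sharing $d_{n+1}/[(1+c_{n+1}^2)(1+c_n^2)]$ into $d_n d_{n+1}/(1+c_{n+1}^2)$ and $d_n d_{n+1}c_{n+1}/(1+c_{n+1}^2)$. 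The residual terms with common factor $d_n/D$, combined with the two pieces of $2\,\Re[\pp(\beta_{n-1}-\beta_n)]$, factor as $(1-d_n)$ times $(c_{n-1}^2-c_n^2)/D$ or $(c_n-c_{n-1})(1-c_nc_{n-1})/D$ respectively, yielding the last pieces of each part of the stated formula.
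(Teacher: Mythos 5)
Your proof is correct and follows essentially the same route as the paper: match the recurrences (\ref{eqrrttj-CDkerenl1}) and (\ref{Eq-TTRR-Rn1}) through $R_n(z;t)=Q_n(z;t)\prod_{k=1}^{n}(1+ic_k(t))$ to express $\beta_n,\alpha_n$ in terms of $c_n,d_n$, differentiate, and substitute into the equations of Theorem \ref{Thm-toda-general} with $\pp=\overline{\qq}$ --- with your detailed simplification of the $\dot d_n$ equation (which checks out, including the telescoping identities and the $(1-d_n)$ factorization) supplying exactly the step the paper dismisses as ``in a similar manner''. One remark: your identification $\alpha_n=4d_n/[(1+ic_n)(1+ic_{n-1})]$ disagrees with the paper's displayed formula (\ref{betan-xin-for-cn-dn}), which has $(1-ic_{n-1})$ in the denominator, but yours is the correct version --- it is forced by dividing (\ref{Eq-TTRR-Rn1}) by the leading coefficients, it alone yields the conjugation identities $\alpha_{n+1}/(\beta_{n+1}\beta_n)=\overline{\alpha_{n+1}}$ and $\alpha_n/(\beta_n\beta_{n-1})=\overline{\alpha_n}$ on which both arguments rely, and it is the only one consistent with the paper's own intermediate expression for $\dot c_n$ --- so the $(1-ic_{n-1})$ in (\ref{betan-xin-for-cn-dn}) is a typo rather than a gap in your argument.
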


\begin{proof}
Since, $Q_{n}(z; t) \prod_{k=1}^{n}(1+ic_{k}(t)) = R_{n}(z; t)$, $n \geq 1$, the coefficients $\beta_{n}(t)$, $\alpha_{n}(t)$, $c_{n}(t)$ and $d_{n}(t)$ which appear in the three term recurrence relations (\ref{eqrrttj-CDkerenl1}) and (\ref{Eq-TTRR-Rn1}) are such that
\begin{equation}\label{betan-xin-for-cn-dn}
\beta_{n}(t)=-\frac{1-ic_{n}(t)}{1+ic_{n}(t)}\quad\mbox{and}\quad\alpha_{n}(t)=\frac{4d_{n}(t)}{(1+ic_{n}(t))(1-ic_{n-1}(t))},
\end{equation}
for $n\geq1,$ with $c_{0}(t)=1$ and $d_{1}(t)= 0$.
Notice that differentiating $\beta_{n}(t)$ given by (\ref{betan-xin-for-cn-dn}) with respect to $t$, we obtain
\begin{equation}\label{deriv-betan}
\frac{\dot{\beta}_{n}(t)}{\beta_{n}(t)}=-i\frac{2\dot{c}_{n}(t)}{1+c_{n}^2(t)},\quad n\geq1.
\end{equation}

Consequently since the coefficients $\beta_{n}(t)$ and $\alpha_{n}(t)$ satisfy (\ref{Eq-Toda-02}), using (\ref{betan-xin-for-cn-dn}) and (\ref{deriv-betan}), we conclude that, for $n\geq1$,
\[
\begin{array}{rcl}
\hspace*{-3ex} \dot{c}_{n}(t)&=&\displaystyle 2(\pp+\qq) \left\{\frac{d_{n}(t)[c_n(t)+c_{n-1}(t)]}{1+c^2_{n-1}(t)}-\frac{d_{n+1}(t)[c_n(t)+c_{n+1}(t)]}{1+c^2_{n+1}(t)} \right\}\\[3ex]
& & \displaystyle  + i \, 2(\pp-\qq) \left\{\frac{d_{n}(t)[1-c_n(t)c_{n-1}(t)]}{1+c^2_{n-1}(t)}-\frac{d_{n+1}(t)[1-c_n(t)c_{n+1}(t)]}{1+c^2_{n+1}(t)}\right\}.
\end{array}
\]
Hence,  the expression for $\dot{c}_{n}(t)$ in Theorem \ref{todalortcor} is a consequence of  $\pp = \overline{\qq}$. In a similar manner, using the value of $\alpha_{n}(t)$ given by (\ref{betan-xin-for-cn-dn}), one can also prove the expression for $\dot{d}_{n}(t)$.   \boxend 
\end{proof}

To obtain a special case of the results presented in Theorem \ref{todalortcor}, we assume that the measure $\mu$ satisfies the symmetry $d\mu(e^{i\theta}) = -d\mu(e^{i(2\pi-\theta)})$ and that $\qq$ is real. Hence, the measure $\mu^{(t)}$ also satisfies the same symmetry and, as a consequence, $\Va_{n-1}(t)$  is real   and $ \rho_{n}^{(t)}(1) = 1,   n \geq 1. $
This leads to the following corollary of Theorem \ref{todalortcor}.

\begin{corollary}
If $\mu$ is such that $d\mu(e^{i\theta}) = -d\mu(e^{i(2\pi-\theta)})$ and $\qq$ is real then $c_{n}(t) = 0$ for $n \geq 1$, and $d_n(t)$ satisfy 
\begin{equation}\label{langd}
 \dot{d}_{n} = \displaystyle 4\qq\, d_{n} \left(d_{n-1} -d_{n+1} \right), \quad n \geq 2, 
\end{equation}
with $d_{1}(t)= 0$.
\end{corollary}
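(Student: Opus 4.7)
The plan is to exploit the symmetry to reduce the general formula for $\dot{d}_n$ from Theorem \ref{todalortcor} to the Langmuir form. The argument breaks into three steps: propagate the symmetry from $\mu$ to $\mu^{(t)}$, deduce $c_n(t) \equiv 0$, and then simplify the differential equation.

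First I would verify that the modified measure $\mu^{(t)}$ inherits the reflection symmetry $d\mu(e^{i\theta}) = -d\mu(e^{i(2\pi-\theta)})$. Since $\qq$ is real, $\overline{\qq} = \qq$, so at $z = e^{i\theta}$ the modifying weight is $e^{-t(\overline{\qq}z + \qq/z)} = e^{-2t\qq \cos\theta}$, which is invariant under $\theta \mapsto 2\pi - \theta$. Hence $d\mu^{(t)}(e^{i\theta})$ satisfies the same symmetry as $d\mu$.

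Next I would invoke the standard OPUC fact (see \cite{Simon-book-p1}) that this symmetry forces the monic orthogonal polynomials $\Phi_n(z;t)$ with respect to $\mu^{(t)}$ to have real coefficients. Consequently $\Va_{n-1}(t) = -\overline{\Phi_n(0;t)} \in \R$ and $\Phi_n(1;t) \in \R$, which gives $\Phi_n^\ast(1;t) = \overline{\Phi_n(1;t)} = \Phi_n(1;t)$, so $\rho_n^{(t)}(1) = 1$ for all $n \geq 0$. The product $\rho_{n-1}^{(t)}\Va_{n-1}(t)$ is therefore real, and its real part is strictly less than $1$ (otherwise the positive quantity $g_n(t)$ of (\ref{parametricSeq1}) would fail to be defined). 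From the explicit expression for $c_n(t)$ given just after (\ref{Eq-TTRR-Rn1}), the numerator $\Im(\rho_{n-1}^{(t)}\Va_{n-1}(t))$ vanishes while the denominator does not, so $c_n(t) = 0$ for all $n \geq 1$.

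Finally I would substitute $c_n(t) \equiv 0$ and $\Im(\qq) = 0$ into the formula for $\dot{d}_n$ in Theorem \ref{todalortcor}. The entire $\Im(\qq)$ line vanishes by the second assumption. In the $\Re(\qq)$ line each factor $1 + c_k^2$ collapses to $1$, and the last summand carries the factor $c_{n-1}^2 - c_n^2 = 0$, so it also vanishes. What remains is exactly
\[
\dot{d}_n = 4\qq\,\bigl[d_n d_{n-1} - d_n d_{n+1}\bigr] = 4\qq\, d_n(d_{n-1} - d_{n+1}), \quad n \geq 2,
\]
which is (\ref{langd}); the initial condition $d_1(t) = 0$ is inherited directly from Theorem \ref{todalortcor}. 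The only nontrivial step is the OPUC symmetry argument that produces real Verblunsky coefficients and $\rho_n^{(t)}(1) = 1$; everything else is a one-line substitution.
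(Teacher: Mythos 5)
Your proposal is correct and takes essentially the same route as the paper: symmetry of $\mu$ together with real $\qq$ passes to $\mu^{(t)}$, forcing real Verblunsky coefficients and $\rho_{n}^{(t)}(1)=1$, hence $c_{n}(t)=0$ for $n\geq 1$, after which the $\dot{d}_{n}$ equation of Theorem \ref{todalortcor} collapses to the Langmuir form. One harmless slip: not every factor $1+c_{k}^{2}$ collapses to $1$, since the paper's convention is $c_{0}(t)=1$, so for $n=2$ the first term has denominator $1+c_{0}^{2}=2$; but that term is multiplied by $d_{1}(t)=0$, so your conclusion is unaffected.
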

Notice that the relation obtained in (\ref{langd}) is a type of Langmuir lattice.

\setcounter{equation}{0}
\section{From orthogonal polynomials on the unit circle}
\label{Sec-OPUC}

Let  $\mu$ be a positive measure defined on the unit circle $\mathbb{T}$ and let 
\[
  \mathcal{L}[f] = \int_{\mathbb{T}} f(z) zd \mu(z) \quad \mbox{and}  \quad \mathcal{L}^{(t)}[f] = \int_{\mathbb{T}} f(z) zd\mu^{(t)}(z),
\]
where 
\[
  d\mu^{(t)}(z) = e ^{-t \left(\overline{\qq}z+ \qq/z \right)}   d \mu(z) = e^{-2t[\Re(\qq)\cos \theta + \Im(\qq) \sin \theta]} d \mu(e^{i\theta}).   
\]
Clearly, in this case the monic L-orthogonal polynomials $Q_n(z;t)$ defined by (\ref{Eq-Def-Lorthogonality}) (with $z$ in the place of $x$) for any $t$ fixed are actually the orthogonal polynomials  on the unit circle, $\Phi_{n}(z;t)$, with respect to the measure $\mu^{(t)}$.  Again, we will denote the associated Verblunsky coefficients and the associated reciprocal polynomials of $\Phi_n(w;t)$, respectively, by $\Va_{n}(t)$ and $\Phi_{n}^{\ast}(w;t)$, $n \geq 0$. It is known (see \cite{Simon-book-p1}) that the polynomials $Q_n(z;t) = \Phi_{n}(z;t)$  satisfy the relations
\begin{equation} \label{relaOPUC}
\begin{array}{l}
  Q_n(z;t) =  z Q_{n-1}(z;t) - \overline{\Va_{n-1}(t)}\, Q_{n-1}^{\ast}(z;t), \\[1.5ex]
   Q_n(z;t) = (1 - |\Va_{n-1}(t)|^2) z Q_{n-1}(z;t) - \overline{\Va_{n-1}(t)}  Q_{n}^{\ast}(z;t),
\end{array}   \quad n \geq 1,
\end{equation}
with $ Q_0(z;t)=1$ and $Q_{0}^{\ast}(z;t)=1$.

The existence of those polynomials are guaranteed by the positiveness of the measure $\mu^{(t)}$.  That is, with the moments 
\begin{equation} \label{Eq-Moments-OPUC}
    \nu_{k}^{(t)} = \mathcal{L}^{(t)}[z^{k}],  \quad k=0, \pm1, \pm2, \ldots , 
\end{equation}
the associated Hankel determinants satisfy condition (a) of (\ref{Eq-newHankelCond}). However, in general one can not assure condition (b) of (\ref{Eq-newHankelCond}). 

It is well known that the sequence of the Verblunsky coefficients $\Va_{n}(t)$ associated with the measure $  d\mu^{(t)}(z) = e ^{t \left(z+ 1/z \right)} d \mu(z) $ satisfies the so-called Schur flow equation
\[
     \dot{\Va}_{n} = (1-|\Va_{n}|^2)\left(\Va_{n+1} - \Va_{n-1}\right), 
\]
see, for example, \cite{AmmarGragg,gol2006,muka-Naka2002}. In \cite{gol2006} a Lax representation for the Schur flow equations  was obtained by using the CMV matrices (see \cite{CMV2003} for more details about CMV matrices).

In \cite{Nenciu2005} the connection between of the Verblunsky coefficients and the defocusing Ablowitz-Ladik system
\[
    i \dot{\Va}_{n} = \Va_{n+1} -2\Va_{n} + \Va_{n-1} - |\Va_{n}|^2\left(\Va_{n+1} - \Va_{n-1}\right).
\]
was established.  Moreover, also in  \cite{Nenciu2005}, by using the CMV matrices and by defining certain Hamiltonians, a Lax representation for the defocusing Ablowitz-Ladik system was given, with emphasis for the {\em p}--periodic Verblunsky coefficients, i.e., when $\Va_{n+p} = \Va_{p}.$ \\

\noindent {\bf Assumption}:  Let the moments (\ref{Eq-Moments-OPUC}) be such that  condition (b) of (\ref{Eq-newHankelCond}) also hold. \\[-1ex]

With this assumption we also have $\Va_{n}(t) \neq 0$, $n \geq 0$. In this case, using (\ref{relaOPUC}), we can see that the polynomials $Q_n(z;t)$  satisfy the three term recurrence relation  
\begin{equation} \label{Eq-TTRR-OPUC}
  Q_{n+1}(z; t) = [z - \beta_{n+1}(t)] Q_{n}(z; t) - \alpha_{n+1}(t)\,z Q_{n-1}(z; t), \quad n \geq 1,
\end{equation}
with $Q_{0}(z; t) = 1$ and $Q_{1}(z; t) = z- \beta_{1}(t)$, where $\beta_{1}(t) = \overline{\Va_{0}(t)}$, 
\[
  \beta_{n+1}(t) =  -\frac{\overline{\Va_{n}(t)}}{\overline{\Va_{n-1}(t)}} \quad \mbox{and} \quad \alpha_{n+1}(t) = \frac{\overline{\Va_{n}(t)}}{\overline{\Va_{n-1}(t)}} (1 - |\Va_{n-1}(t)|^2), \quad n \geq 1.
\]
Since  $\overline{\Va_{n}(t)} = (-1)^{n} \beta_{1}(t) \beta_{2}(t) \cdots \beta_{n+1}(t)$, observe that
\[
  \frac{\overline{\dot{\Va}_{n}(t)}}{\overline{\Va_{n}(t)}} = \sum_{j=1}^{n+1} \frac{\dot{\beta}_{j}(t)}{\beta_{j}(t)}, \quad n \geq 0. 
\]
Thus, from (\ref{soma_dos_betas}), we find
$
  \overline{\dot{\Va}_{n}(t)} = (1-|\Va_{n}(t)|^2)\left[\qq\,\overline{\Va_{n-1}(t)} - \overline{\qq}\, \overline{\Va_{n+1}(t)}\right], $ $ n \geq 1. 
$

Hence, we can state the following result.


\begin{theorem} \label{Thm-Schur}
Let $\mu$ be a positive measure on the unit circle, and let $\{\Va_n(t)\}_{n=0}^{\infty}$ be the Verblunsky coefficients associated with the measure $\mu^{(t)}$ given by 
$ d \mu^{(t)}(z) = e ^{-t \left(\overline{\qq}z + \frac{\qq}{z}\right)}$  $d \mu(z).$
Assuming that the Verblunsky coefficients are all different from zero we obtain the following
\begin{equation} \label{Eq-Schur}
     \dot{\Va}_{n} = (1-|\Va_{n}|^2)\left(\overline{\qq}\,\Va_{n-1} - \qq\, \Va_{n+1}\right), \quad n \geq 1. 
\end{equation}
\end{theorem}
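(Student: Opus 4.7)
My plan is to derive the Schur-like flow (\ref{Eq-Schur}) as a direct corollary of the extended relativistic Toda lattice formula (\ref{soma_dos_betas}) established in Section \ref{Sec-Proof-TheoToda}, specialised to $\pp=\overline{\qq}$, after translating between the recursion coefficients $\beta_n(t),\alpha_n(t)$ and the Verblunsky coefficients $\Va_n(t)$. The assumption that condition (b) of (\ref{Eq-newHankelCond}) holds guarantees $\Va_n(t)\neq 0$, so that the three-term recurrence (\ref{Eq-TTRR-OPUC}) is valid with the explicit formulas
\[
\beta_{n+1}(t) = -\frac{\overline{\Va_{n}(t)}}{\overline{\Va_{n-1}(t)}},
\qquad
\alpha_{n+1}(t) = \frac{\overline{\Va_{n}(t)}}{\overline{\Va_{n-1}(t)}}\bigl(1-|\Va_{n-1}(t)|^2\bigr),
\]
and with the telescoping identity $\overline{\Va_n(t)}=(-1)^{n}\beta_1(t)\cdots\beta_{n+1}(t)$ that is already in the text.

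The first step is to exploit this product form: logarithmic differentiation immediately yields
\[
\frac{\overline{\dot{\Va}_n(t)}}{\overline{\Va_n(t)}} \;=\; \sum_{j=1}^{n+1}\frac{\dot{\beta}_j(t)}{\beta_j(t)},
\]
which converts the problem into one about a partial sum of logarithmic derivatives of the $\beta_j$'s.

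The second step is to invoke (\ref{soma_dos_betas}) with $n$ replaced by $n+1$ and with $\pp=\overline{\qq}$; this reduces that sum to a closed-form expression involving only $\alpha_{n+2}$, $\beta_{n+1}$ and $\beta_{n+2}$. Substituting the Verblunsky expressions above one gets the clean identifications
\[
\beta_{n+2}(t)\,\beta_{n+1}(t) \;=\; \frac{\overline{\Va_{n+1}(t)}}{\overline{\Va_{n-1}(t)}},
\qquad
\alpha_{n+2}(t) \;=\; \frac{\overline{\Va_{n+1}(t)}}{\overline{\Va_{n}(t)}}\bigl(1-|\Va_{n}(t)|^2\bigr),
\]
so that the two terms on the right-hand side of (\ref{soma_dos_betas}) collapse to $-\overline{\qq}\,\overline{\Va_{n+1}}/\overline{\Va_n}\,(1-|\Va_n|^2)$ and $\qq\,\overline{\Va_{n-1}}/\overline{\Va_n}\,(1-|\Va_n|^2)$, respectively. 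Multiplying through by $\overline{\Va_n(t)}$ gives
\[
\overline{\dot{\Va}_n(t)} \;=\; \bigl(1-|\Va_n(t)|^2\bigr)\bigl[\qq\,\overline{\Va_{n-1}(t)} - \overline{\qq}\,\overline{\Va_{n+1}(t)}\bigr].
\]

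The final step is simply to take the complex conjugate of this identity, noting that $|\Va_n|^2$ is real, to obtain (\ref{Eq-Schur}). There is no genuine obstacle here: the entire content is a bookkeeping exercise, with the only real care needed in tracking complex conjugates and keeping the index shifts straight when moving from the sum $\sum_{j=1}^{n+1}$ to the single-term right-hand side of (\ref{soma_dos_betas}) and when translating between $\beta_{n+2},\alpha_{n+2}$ and the Verblunsky coefficients $\Va_{n-1},\Va_{n},\Va_{n+1}$.
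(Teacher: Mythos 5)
Your proposal is correct and follows essentially the same route as the paper: the paper likewise writes $\overline{\Va_n(t)}=(-1)^n\beta_1(t)\cdots\beta_{n+1}(t)$, differentiates logarithmically to get $\overline{\dot{\Va}_n}/\overline{\Va_n}=\sum_{j=1}^{n+1}\dot{\beta}_j/\beta_j$, and then applies (\ref{soma_dos_betas}) with $\pp=\overline{\qq}$ together with the Verblunsky expressions for $\beta_{n+1}$ and $\alpha_{n+1}$, concluding by conjugation. Your index bookkeeping ($\beta_{n+2}\beta_{n+1}=\overline{\Va_{n+1}}/\overline{\Va_{n-1}}$, etc.) matches the computation implicit in the paper's "Thus, from (\ref{soma_dos_betas}), we find" step.
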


\begin{remark}
Because of the approach used in this manuscript, we had to assume that the all the Verblunsky coefficients $\Va_{n}(t)$ are different from zero to obtain the results of Theorem {\rm\ref{Thm-Schur}}.  But, this restriction is not necessary in the equation $(\ref{Eq-Schur})$.  

The system of nonlinear difference differential equations $(\ref{Eq-Schur})$ also satisfies the Schur flow equation. 
\end{remark}


\end{document}